\newtheorem{thm}{Theorem}[section]
\newtheorem{lma}[thm]{Lemma}
\newtheorem{prop}[thm]{Proposition}
\newcommand{\N}{\mathbb{N}}
\newcommand{\E}{\mathbb{E}}
\renewcommand{\P}{\mathbb{P}}
\newcommand{\I}{\mathcal{I}}
\newcommand{\W}{\mathscr{W}}
\renewcommand{\i}{\mathtt{i}}
\renewcommand{\j}{\mathtt{j}}
\renewcommand{\k}{\mathtt{k}}
\begin{document}

\title[]{How long is the Chaos Game?}
\author{Ian D. Morris} \address{Department of Mathematics, University of Surrey, Guildford, GU2 7XH}
\email{i.morris@surrey.ac.uk}

\author{Natalia Jurga} \address{Mathematical Institute, University of St Andrews, Scotland, KY16 9SS}
\email{naj1@st-andrews.ac.uk}

\dedicatory{Ian Morris thanks his former teacher, John Little, for introducing him to fractals in general and the chaos game in particular at New College, Swindon, in the 1995-96 academic year.}
 \maketitle
 \begin{abstract}
 In the 1988 textbook \emph{Fractals Everywhere}  M. Barnsley introduced an algorithm for generating fractals through a random procedure which he called the \emph{chaos game}. Using ideas from the classical theory of covering times of Markov chains we prove an asymptotic formula for the expected time taken by this procedure to generate a $\delta$-dense subset of a given self-similar fractal satisfying the open set condition.\\\\
 
 MSC2010 Primary: 28A80; Secondary 00A08, 60J10
 \end{abstract}

 \section{Introduction}
 
An \emph{iterated function system} or \emph{IFS} is defined to be a tuple of contracting transformations of a complete metric space, which in this article will be taken to be $\mathbb{R}^d$. It is well-known that if $(S_1, \ldots, S_N)$ is such an IFS then then there exists  a unique non-empty, compact set $F= \bigcup_{i=1}^N S_iF \subseteq \mathbb{R}^d$ which is called the \emph{attractor} or \emph{limit set} of $(S_1,\ldots,S_N)$. The properties of attractors of iterated function systems and the natural measures supported on them have been the subject of substantial mathematical inquiry for several decades since their introduction in  \cite{BaDe85, Hu81}, and remain a highly active topic of contemporary mathematical research (we note for example \cite{BaHoRa19,DaSi17,Ho14,LiVa16,Sh19,SiWe19,Wu19}) as well as being noted for their aesthetic appeal. 

In \cite{Ba88} Barnsley introduced an algorithm known as the \emph{Chaos Game} for the construction of the limit set $F$ of an iterated function system $(S_1,\ldots,S_N)$. Given an arbitrary starting point $x_0 \in \mathbb{R}^d$, we define a sequence $(x_n)_{n=0}^\infty$ inductively by choosing for each $n \geq 1$ an index $i_n \in \{1,\ldots,N\}$ independently at random according to some fixed non-degenerate probability vector $(p_1,\ldots,p_N)$, and taking $x_n:=S_{i_n}x_{n-1}$ for every $n \geq 1$. It is not difficult to show that the resulting sequence almost surely has the attractor $F$ as its $\omega$-limit set (that is, we have $\bigcap_{m=1}^\infty \overline{\{x_n \colon n \geq m\}}=F$) and it is not much more difficult to show that the distribution $\frac{1}{n}\sum_{k=0}^{n-1}\delta_{x_k}$ converges almost surely to the unique Borel probability measure $m$ supported on $F$ which satisfies $m=\sum_{i=1}^N p_i (S_i)_*m$, as was first established in \cite{El87}. If the initial point $x_0$ is taken to be in the attractor (for example, by taking $x_0$ to be the fixed point of one of the contractions $S_i$) then one obtains the simpler result that the sequence $(x_n)_{n=0}^\infty$ is almost surely dense in the attractor, and for the rest of this article we will prefer to make this assumption on the starting point $x_0$. Yet surprisingly, we have found no trace in the literature of the following question: \emph{how quickly does the randomly-generated sequence $(x_n)$ become dense in the attractor?} In this direction we are aware only of the article \cite{GuIgRo96}, which informally investigates the problem of choosing probabilities in such a way as to generate fractal images with maximal efficiency using the chaos game procedure. In the present note we attempt to fill this gap in the literature with a rigorous investigation.

Let us make our question precise.  Given a compact subset $F$ of $\mathbb{R}^d$ we will say that a subset $X$ of $F$ is \emph{$\delta$-dense} in $F$ if for every $z \in F$ there exists $x \in X$ such that $d(x,z)\leq \delta$ in the standard metric on $\mathbb{R}^d$. (Since $X$ is a subset of $F$, this is equivalent to asking that the Hausdorff distance between $\overline{X}$ and $F$ is at most $\delta$.) Given an IFS $(S_1,\ldots,S_N)$ with attractor $F$, for each $\delta>0$, $\i=i_1i_2 \ldots \in \{1,\ldots,N\}^{\mathbb{N}}$ and starting point $v \in F$ we define the $\delta$-\emph{waiting time} along the sequence $\i$ as
\[W_{\delta,v}(\i):=\inf\left\{n \geq 1 \colon \left\{S_{i_1}v,S_{i_2}S_{i_1}v,\ldots,S_{i_n}\cdots S_{i_1}v\right\} \text{ is }\delta\text{-dense in $F$}\right\}.\]
If additionally a nondegenerate probability vector $(p_1,\ldots,p_N)$ is understood, then we define the \emph{expected $\delta$-waiting time} with starting point $v$ to be the expectation $\mathbb{E}(W_{\delta,v})$ with respect to the $(p_1,\ldots,p_N)$-Bernoulli measure on $\{1,\ldots,N\}^{\mathbb{N}}$.

We recall that an IFS is said to satisfy the \emph{open set condition} or \emph{OSC} if there exists a nonempty open set $U\subset \mathbb{R}^d$ such that $\bigcup_{i=1}^N S_iU \subseteq U$ with the sets $S_iU$ being pairwise disjoint.  The set $U$ may without loss of generality be taken to be bounded, and we will always assume that this is the case. We recall that $S_i \colon \mathbb{R}^d \to \mathbb{R}^d$ is called a \emph{similitude} or \emph{similarity transformation} if there exists $r_i \in (0,1)$ such that $d(S_iu,S_iv) =r_i d(u,v)$ for all $u,v \in \mathbb{R}^d$; in this case we say that $r_i$ is the \emph{contraction ratio} of $S_i$. If $(S_1,\ldots,S_N)$ is an IFS of similitudes with respective contraction ratios $r_1,\ldots,r_N$ then the \emph{similarity dimension} of $(S_1,\ldots,S_N)$ is defined to be the unique real number $s \geq 0$ such that $\sum_{i=1}^N r_i^s=1$. By a classical theorem of Hutchinson (see \cite{Hu81}), if an IFS of similitudes satisfies the open set condition then the Hausdorff and box dimensions of the attractor $F$ are both equal to the similarity dimension $s$. It was also shown by Hutchinson that there exists a unique Borel probability measure $m$ satisfying $m=\sum_{i=1}^N r_i^s (S_i)_*m$, and that this measure is supported on $F$ and has Hausdorff dimension equal to that of $F$; moreover, if any other probability vector is chosen then the resulting measure has Hausdorff dimension smaller than that of $F$. At an intuitive level this suggests that the limit distribution of the random sequence $(x_n)_{n=0}^\infty$ generated by the Chaos Game will be most evenly distributed around the attractor when the underlying probability vector is $(r_1^s,\ldots,r_N^s)$, and will be more concentrated in  certain subregions of the attractor for other probability vectors. Thus we might expect the probability vector $(r_1^s,\ldots,r_N^s)$ to generate a random sequence which fills up the attractor most efficiently, and for other choices of probability vector to result in longer waiting times for the sequence to become $\delta$-dense in the attractor. This intuition is realised in our main result:

\begin{thm}\label{natural}
Let $(S_1, \ldots, S_N)$ be an IFS of similitudes $S_i \colon \mathbb{R}^d \to \mathbb{R}^d$, with contraction ratios given by $r_i \in (0,1)$, which satisfies the OSC. Let $s$ denote the similarity dimension of $(S_1,\ldots,S_N)$, let $(p_1,\ldots,p_N)$ be a nondegenerate probability vector, and define
\begin{eqnarray}
t:= \max_{1 \leq i\leq N} \frac{\log p_i}{\log r_i},\label{t thing}
\end{eqnarray}
where we observe that by the arithmetic-geometric mean inequality we have $t\geq s$ with equality if and only if $(p_1,\ldots,p_N)=(r_1^s,\ldots,r_N^s)$. If the maximum in (\ref{t thing}) is attained at a unique value of $i \in \{1, \ldots, N\}$ then there exists a constant $C>0$ such that for every starting point $v \in F$ and $0<\delta< \min_{1 \leq i \leq N} r_i$,
\begin{eqnarray}
C^{-1} \delta^{-t}\log\log\left(\frac{1}{\delta}\right) \leq \mathbb{E}(W_{\delta,v}) \leq C \delta^{-t} \left(\log\log\left(\frac{1}{\delta}\right)\right)^2.\label{other eqn1}
\end{eqnarray}
If the maximum in (\ref{t thing}) is \emph{not} attained at a unique value of $i \in \{1, \ldots, N\}$ then there exists a constant $C>0$ such that for every starting point $v \in F$ and $0<\delta< \min_{1 \leq i \leq N} r_i$, 
\begin{eqnarray}
C^{-1} \delta^{-t}\log \left(\frac{1}{\delta}\right) \leq \mathbb{E}(W_{\delta,v}) \leq C \delta^{-t} \log \left(\frac{1}{\delta}\right).\label{other eqn2}
\end{eqnarray}
\end{thm}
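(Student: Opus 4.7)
My plan is to recast $W_{\delta,v}$ as a cover time on a finite family of $\delta$-cylinders and then apply a coupon-collector style analysis that exploits the near-independence of the chaos game under the OSC. Let $\Sigma_\delta$ denote the set of admissible words $\j = j_1\cdots j_k$ with $r_\j<\delta\leq r_{j_1\cdots j_{k-1}}$; under the OSC the cylinders $\{S_\j F : \j\in\Sigma_\delta\}$ form an almost-partition of $F$ into pieces of diameter $\asymp\delta$. Since $x_m = S_{i_m}\circ\cdots\circ S_{i_1}v$, the OSC forces $x_m\in S_\j F$ to coincide with the event that the positions $i_{m-k+1},\ldots,i_m$ spell $\j$ in reverse, and hence $W_{\delta,v}$ is comparable up to constants to $\max_{\j\in\Sigma_\delta}T_\j$, where $T_\j$ is the first occurrence time of the reversed pattern $\j^R$ as a factor of the i.i.d.\ sequence $(i_m)$.

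The next step is to catalogue the Bernoulli weights $p_\j = p_{j_1}\cdots p_{j_k}$. Writing $p_i = r_i^{t_i}$ for $t_i = \log p_i/\log r_i$, one obtains the identity $p_\j = r_\j^{\bar t_\j}$ with $\bar t_\j$ the $(\log r_{j_l})$-weighted mean of the $t_{j_l}$, so $p_\j\geq C_0\delta^t$ with saturation (up to constants) only for cylinders whose letters all realise the maximum in \eqref{t thing}. When this maximum is attained uniquely at $i^*$, perturbing the pure word $(i^*)^k$ by $m$ defect letters multiplies $p_\j/\delta^t$ by a bounded factor of order $e^{cm}$, so the extremal family $\{p_\j\asymp\delta^t\}$ has cardinality $\Theta(\log(1/\delta))$. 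When the maximum is attained on a subset $I\subseteq\{1,\ldots,N\}$ of size at least two, the cylinders in $\Sigma_\delta$ built entirely from letters in $I$ form a sub-IFS of positive similarity dimension $s_I\leq t$, giving $\Theta(\delta^{-s_I})$ extremal cylinders all of mass $\asymp\delta^t$.

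I would then combine these counts with the standard pattern-matching bounds $\mathbb{E}[T_\j]\asymp p_\j^{-1}$ and $\mathbb{P}(T_\j > n)\leq C\exp(-c p_\j n)$, proved by a blocking argument over disjoint windows of length $|\j|$ with the autocorrelation correction absorbed into the constants. The lower bounds in \eqref{other eqn1} and \eqref{other eqn2} follow from a Chung--Erd\H{o}s second-moment inequality applied to $\sum_{\j\in\mathrm{ext}}\mathbf{1}_{\{T_\j>n\}}$: two distinct patterns of equal length cannot co-occur at overlapping positions, while at well-separated positions the occurrence events are independent by the i.i.d.\ assumption, and a Poisson approximation gives $\mathbb{P}(T_\j > n, T_{\j'} > n)\asymp\mathbb{P}(T_\j > n)\mathbb{P}(T_{\j'} > n)$ uniformly over the extremal family. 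This yields the logarithmic factor $\log|\mathrm{ext}|$ in each case, namely $\log\log(1/\delta)$ in the unique case and $\log(1/\delta)$ in the non-unique case. The matching upper bounds proceed from
\begin{equation*}
\mathbb{E}[W_{\delta,v}] \ll \int_0^\infty \min\!\Bigl\{1,\;\sum_{\j\in\Sigma_\delta} e^{-c p_\j u}\Bigr\}\,du,
\end{equation*}
decomposed into geometric $p$-shells: in the non-unique case the extremal shell dominates and produces $\delta^{-t}\log(1/\delta)$, while in the unique case the defect-$m$ shells of cardinality $\asymp(\log(1/\delta))^m/m!$ and mass $\asymp e^{cm}\delta^t$ combine to yield the $\delta^{-t}(\log\log(1/\delta))^2$ bound.

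The principal obstacle is the upper estimate in the unique case, where the near-extremal cylinders share almost all their letters and a naïve termwise bound on the sum $\sum_{\j}e^{-cp_\j u}$ overcounts by a logarithmic factor; this is the origin of the $\log\log(1/\delta)$-factor discrepancy between the upper and lower bounds in \eqref{other eqn1}, and closing it would require a substantially sharper Poisson-approximation argument controlling all $p$-shells simultaneously, which the present plan does not attempt.
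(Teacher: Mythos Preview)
Your architecture matches the paper's: reduce $W_{\delta,v}$ to a symbolic cover problem on the $\delta$-cylinder family $\Sigma_\delta$ (the paper calls it $P_\delta$), identify the extremal cylinders as those with $p_\j\asymp\delta^t$, and run a coupon-collector analysis. The paper formalises the reduction by building an irreducible Markov chain on $P_\delta$ whose cover time is sandwiched between $W_{c\delta,v}$ and $W_{\delta/c,v}$, then applies Matthews' hitting-time bounds. You instead stay with the i.i.d.\ sequence and treat each $T_\j$ as a pattern-occurrence time, using a union bound for the upper estimates and a second-moment argument for the lower ones. Your upper-bound shell decomposition in the unique case is essentially the paper's $B/B'$ split (few defects versus many) with the cutoff $k_\delta\asymp\log\log(1/\delta)$, so on that side the two routes converge.

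Two points need repair. First, the OSC alone does not make the pieces $S_\j F$ disjoint, so $x_m\in S_\j F$ does \emph{not} force $i_m\cdots i_{m-k+1}$ to spell $\j$; only the forward implication holds. The paper invokes Schief's theorem (OSC $\Rightarrow$ strong OSC) to produce a ball $B(x,\epsilon)\subset U$ with $x\in F\cap U$, so that the images $S_\j B(x,\epsilon)$ \emph{are} pairwise disjoint and the symbolic reduction becomes two-sided up to a multiplicative constant in $\delta$ (their Proposition~\ref{equiv}). You need this step.

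Second, and more seriously, the assertion that ``two distinct patterns of equal length cannot co-occur at overlapping positions'' is false: take $\j=aba$ and $\j'=bab$ in the string $abab$, or within your own extremal family in the unique case, observe that $i_0i_0ji_0i_0$ and $i_0ji_0i_0i_0$ both appear in $i_0i_0ji_0i_0i_0$ at overlapping positions. A Chung--Erd\H{o}s or Chen--Stein argument for $\sum_{\j}\mathbf{1}_{\{T_\j>n\}}$ can certainly be pushed through, but it needs an honest decorrelation estimate for the \emph{non-occurrence} events $\{T_\j>n\}$, not this combinatorial impossibility. The paper sidesteps the correlation issue entirely: Matthews' lower bound (their Proposition~\ref{matthewsl2}) requires only a uniform lower bound on $\mathbb{E}_\i\tau_\j$ over a well-chosen subset $B$, and the paper manufactures such a $B$ by giving every word in it a common prefix of fixed length $j$---so that at least $j$ transitions separate any two states---and then combines the trivial bound $\mathbb{E}w_\j\geq\mathbb{E}_\j\tau_\j^+=1/p_\j$ with a short-time hitting-probability estimate (Lemmas~\ref{theta} and~\ref{trans}). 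This is considerably cleaner than controlling pairwise correlations of non-occurrence events and would plug directly into your framework.
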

Thus for the probability vector $(p_1,\ldots,p_N)=(r_1^s,\ldots,r_N^s)$ we have for every starting point $v \in F$
\[C^{-1} \delta^{-s}\log \left(\frac{1}{\delta}\right) \leq \mathbb{E}(W_{\delta,v}) \leq C \delta^{-s} \log \left(\frac{1}{\delta}\right)\]
for all $0<\delta< \min_{1 \leq i \leq N} r_i$, and for every other probability vector $\mathbb{E}(W_{\delta,v})$ tends to infinity more rapidly as $\delta \to 0$. At an intuitive level the principle underlying this result is that for the ``natural'' probability measure $(r_1^s,\ldots,r_N^s)$, all regions of the attractor with diameter $\delta$ take an approximately equal time to visit; for other probability measures, some $\delta$-balls in the attractor are substantially more difficult to access than others. As we will see below, the key determiner of the expected waiting time is the expected time taken to visit the most slowly accessible part of the attractor, and it transpires that this in turn corresponds to a region of the form $S_i^nU$ where $i$ is chosen to maximise the ratio $\log p_i / \log r_i$ and $n \geq 1$ is chosen such that this region has diameter approximately $\delta$. 
In the case where $\log p_i / \log r_i$ is maximised at a unique index $i$ it is interesting to ask to what extent the result \eqref{other eqn1} may be sharpened, but we have not been able to determine the exact rate of growth of $\mathbb{E}(W_{\delta,v})$ in that case in the present article. It is also interesting to ask what information may be obtained regarding the pointwise almost sure behaviour of the family of random variables $W_{\delta,v}$ for fixed $v$.

In the case where the starting point $v$ is not taken to be in the attractor, since the sequence $S_{i_n}\cdots S_{i_1}v$ approaches the attractor at a uniform exponential rate, one may obtain the same asymptotics for the expected waiting time as in Theorem \ref{natural} but with a larger constant $C$ depending on the initial distance between $v$ and the attractor; we leave the details of this adaptation of our result to the reader.

\begin{figure}
    \centering
    \subfloat[Probability vector $(p_1,p_2,p_3):=(\frac{1}{3},\frac{1}{3},\frac{1}{3})$. The observed $\delta$-waiting time was $W=3408$. The expected value for the $\delta$-waiting time $W$ according to \eqref{other eqn2} was of the order of magnitude of $\delta^{-t}\log(\frac{1}{\delta}) \simeq 3032$.]{{\includegraphics[width=5.8cm]{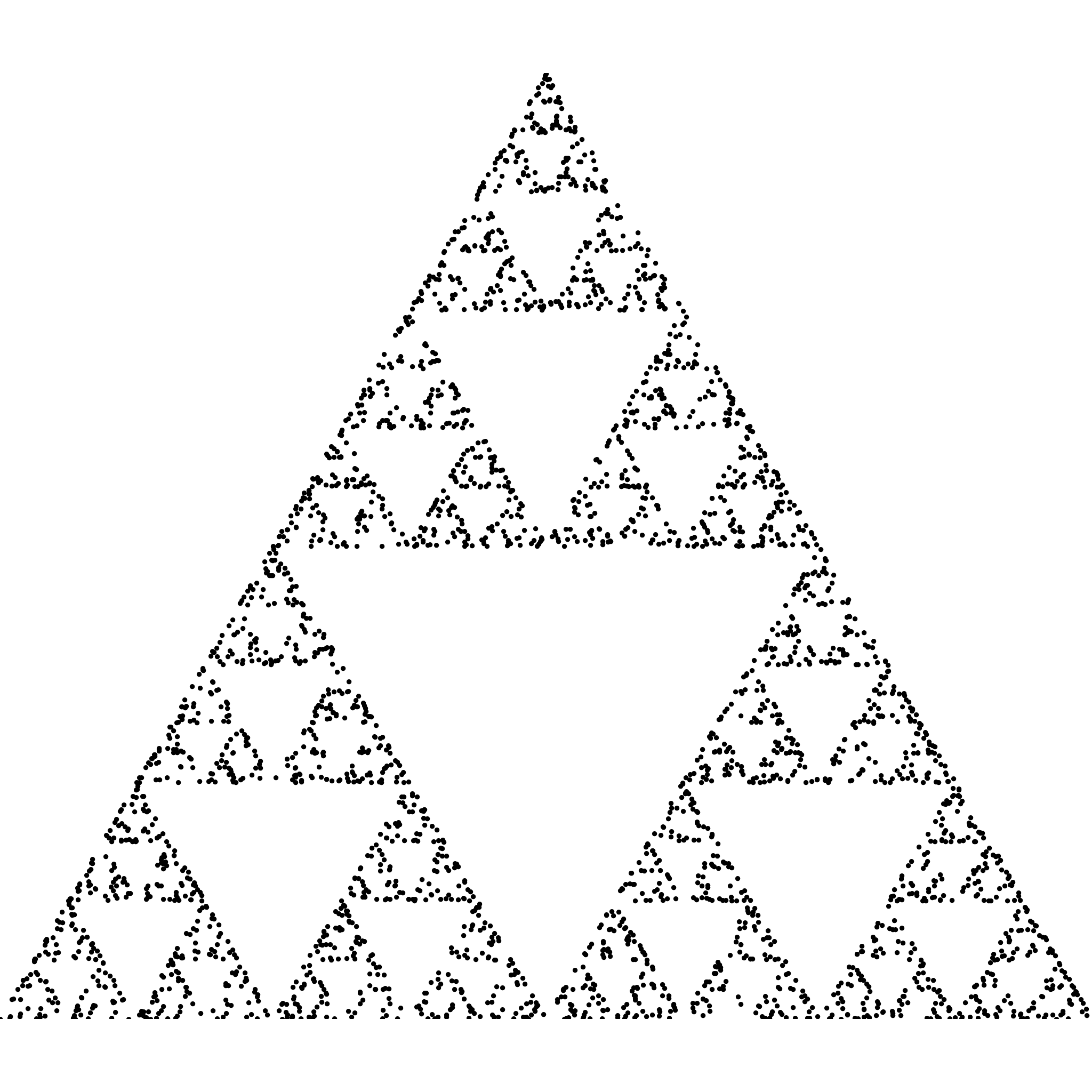}}}
    \qquad
    \subfloat[Probability vector $(p_1,p_2,p_3):=(\frac{1}{4},\frac{1}{4},\frac{1}{2})$. The observed $\delta$-waiting time was $W=18732$.  The expected value for the $\delta$-waiting time $W$ according to \eqref{other eqn2} was of the order of magnitude  of $\delta^{-t}\log(\frac{1}{\delta}) \simeq17035$.]{{\includegraphics[width=5.8cm]{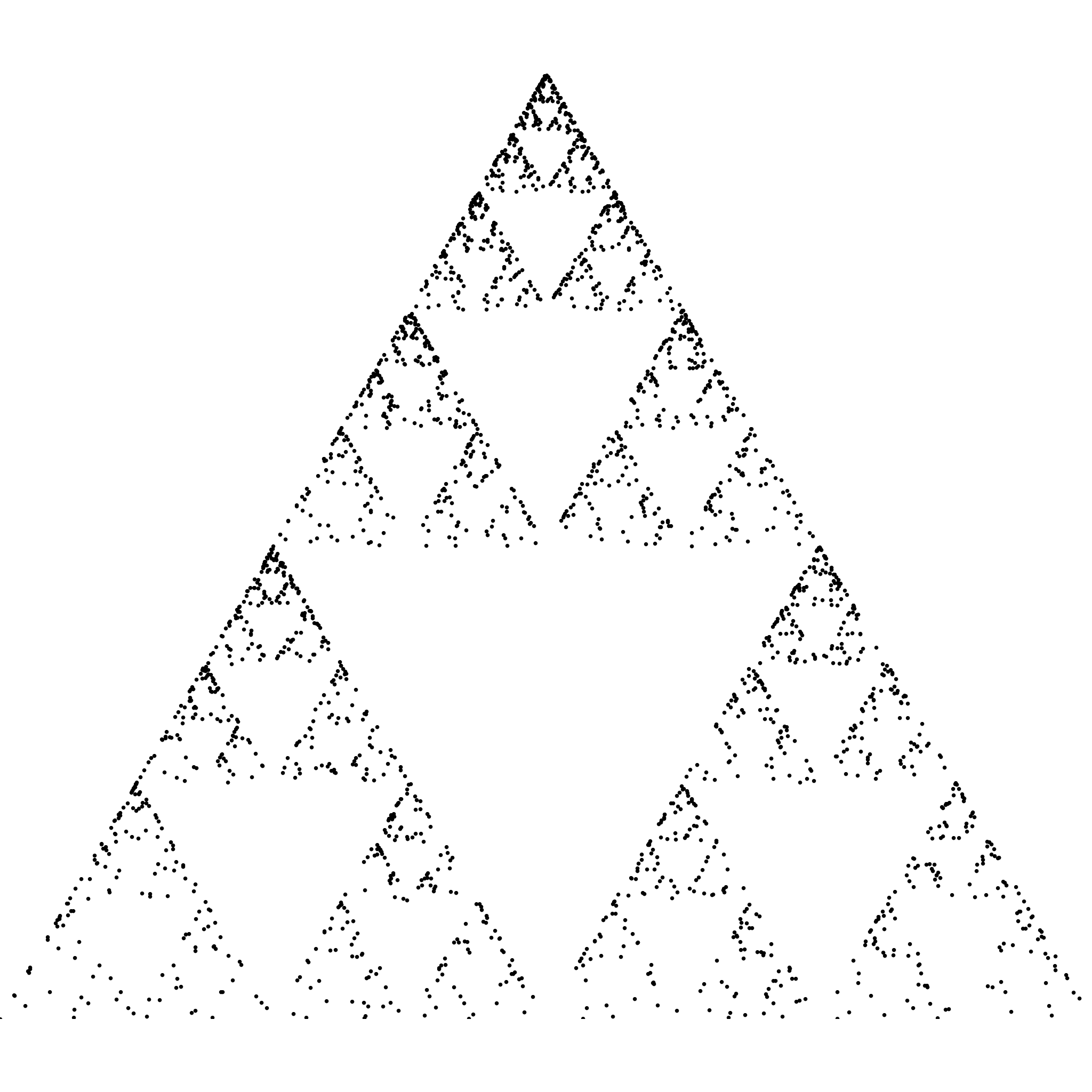}}}
    \caption{Each of these diagrams shows a randomly-generated sequence $(x_n)_{n=0}^W$ starting at $x_0:=0$ and terminating when every point of the Sierpinski triangle has been approached to within distance $\delta:=2^{-6}$. Here the underlying iterated function system is given by $T_1(x):=\frac{x}{2}$, $T_2(x):=\frac{x}{2} + \frac{1}{2}$ and $T_3(x):=\frac{x}{2} + \frac{\sqrt{3}}{4}$. As indicated by Theorem \ref{natural} and suggested by the accompanying heuristic description, with unbalanced probabilities the trajectory spends much more time confined to a small region closest to the fixed point of the transformation carrying highest probability weight, and requires correspondingly more points in order to terminate.}
    \label{fi:onlyfigure}%
\end{figure}

To illustrate the structure of the proof of Theorem \ref{natural} it is helpful to consider a simpler case in which the contraction ratios $r_i$ are all equal to the same constant $r$ and the diameter of the set $U$ is precisely $1$. In this case, if every set of the form $S_{i_m}\cdots S_{i_1}U$ has been visited by the sequence $(x_n)_{n=0}^\infty$ by time $N$ then we certainly have $W_{r^m,v} \leq N$. On the other hand one may shown that there exists $\kappa>0$ such that for every $m \geq 1$, every set of the form $S_{i_m}\cdots S_{i_1}U$ contains an open ball of radius $r^m\kappa$ which in particular does not intersect any \emph{other} set of the form $S_{i_m}\cdots S_{i_1}U$. Thus if the sequence of indices $\i$ fails to include a particular string of the form $i_1\cdots i_m$ before time $N$, we expect that $W_{r^m\kappa,v}> N$. (There is some imprecision here in that the initial point $v$ may by chance have belonged to the ball $r^m\kappa$, but it transpires that this imprecision has a negligible effect in practice.) This suggests that the asymptotic behaviour of the expectation of $W_{\delta,v}$ can be reduced to the problem of determining the expected first time for an IID random sequence in $\{1,\ldots,N\}^{\mathbb{N}}$, chosen with respect to the Bernoulli measure $(p_1,\ldots,p_N)^{\mathbb{N}}$, to include all of the distinct words of length $m$ over the alphabet $\{1,\ldots,N\}$, where $m$ is chosen so that $r^m$ is approximately the size of $\delta$. But this symbolic problem is precisely the classical \emph{coupon collector's problem} described in, for example, \cite{LePe17}.  In the full generality of Theorem \ref{natural} this approach must be adapted somewhat: since the sets $S_{i_m}\cdots S_{i_1}U$ will in general have different diameters for different sequences $i_1,\ldots,i_m$ of the same length $m$, it is necessary to partition the set $\{1,\ldots,N\}^{\mathbb{N}}$ into cylinders and estimate the expected time for all of these cylinders to be visited by a random sequence. This results in a Markov chain analogue of the coupon collector's problem which we solve using techniques adapted from \cite[\S 11]{LePe17}.

Our proof of Theorem \ref{natural} will thus be divided into two parts: the reduction of the problem to a covering problem for Markov chains, and the solution of the latter covering problem. Some possible directions for future research are described at the end of this note.

\section{A Markov chain construction}

For the remainder of this article we fix an IFS of similarities $(S_1, \ldots, S_N)$ which satisfies the OSC and denote the contraction ratio of each map $S_i$ by $r_i$. We also fix a nondegenerate probability vector $(p_1, \ldots, p_N)$. Let us write $r_{\min}:=\min_{i=1, \ldots, N} r_i$. In this section we will show that for each $0<\delta< r_{\min}$, we can construct a Markov chain whose expected time to visit all of its states is approximately proportional to $\E W_{\delta, v_0}$. More precisely, given a Markov chain $(X_n)_{n=0}^\infty$ on a finite state space $\Omega$, we define the \emph{covering time} by
$$\tau_{\textnormal{cov}}=\min\{t \geq 0: \, \forall y \in \Omega, \; \exists s \leq t \; \textnormal{s.t.} \; X_s=y\},$$
that is, the first time that all of the states in $\Omega$ have been visited by the Markov chain. Given $x \in \Omega$, we denote by $\E_x \tau_{\textnormal{cov}}$ the expected covering time given that $X_0=x$. We can now state the main result of this section.

\begin{prop}\label{main2}
For each $0<\delta<r_{\min}$ there exists an irreducible Markov chain $(X_n^\delta)_{n=0}^\infty$ on a finite state space $P_\delta$ such that for each $v_0 \in F$, there exists $\i_0 \in P_\delta$ for which
\begin{eqnarray}
\E W_{\frac{\delta}{c},v_0} \leq \E_{\i_0} \tau_{\textnormal{cov}}  \leq  \E W_{c\delta,v_0} \label{main2-eqn}
\end{eqnarray}
where the constant $c\in (0,1)$ is independent of $\delta$, $v_0$ and $\i_0$.
\end{prop}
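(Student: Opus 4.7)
The plan is to partition the symbolic space into cylinders of approximately constant contraction $\delta$ and identify the Markov chain state at time $n$ with the ``current cylinder'' of the chaos game orbit $x_n$. Let
\[P_\delta := \left\{\omega = \omega_1 \cdots \omega_m : r_\omega < \delta \leq r_{\omega_1} \cdots r_{\omega_{m-1}}\right\},\]
writing $r_\omega := r_{\omega_1}\cdots r_{\omega_m}$; this is a finite antichain with $r_\omega \in [r_{\min}\delta, \delta)$ and $F = \bigcup_{\omega \in P_\delta} S_\omega(F)$. Given $v_0 \in F$, pick $\i_0 \in P_\delta$ with $v_0 \in S_{\i_0}(F)$ (possible since the pieces cover $F$) and define $X_n^\delta$ to be the unique initial segment in $P_\delta$ of the reversed-and-padded address $i_n i_{n-1} \cdots i_1 (\i_0)_1 \cdots (\i_0)_{|\i_0|}$, so that $X_0^\delta = \i_0$. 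This is a Markov chain on $P_\delta$: prepending $i_{n+1}$ produces a new $P_\delta$-prefix of length at most $|X_n^\delta| + 1$ (since $r_{i_{n+1}} r_{X_n^\delta} < \delta$), so $X_{n+1}^\delta$ depends on the past only through $X_n^\delta$ and $i_{n+1}$. Irreducibility is clear, because prepending the letters of any target word $\omega = \omega_1 \cdots \omega_m \in P_\delta$ in the order $\omega_m, \omega_{m-1}, \ldots, \omega_1$ (an event of positive probability) drives any state to $\omega$.

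The lower bound in \eqref{main2-eqn} follows from the structural identity obtained by direct unwinding using $v_0 \in S_{\i_0}(F)$: $X_n^\delta = \omega$ forces $x_n = S_{\omega_1} \circ \cdots \circ S_{\omega_{|\omega|}}(y)$ for some $y \in F$, so $x_n \in S_\omega(F)$. Hence if the chain has covered $P_\delta$ by time $\tau_{\textnormal{cov}}$, the orbit has met every piece $S_\omega(F)$ of diameter at most $\delta \cdot \textnormal{diam}(F)$, so is $(\delta \cdot \textnormal{diam}(F))$-dense in $F = \bigcup_\omega S_\omega(F)$; choosing $c \in (0,1)$ small enough gives $\E_{\i_0} \tau_{\textnormal{cov}} \geq \E W_{\delta/c, v_0}$.

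For the upper bound, invoke Schief's theorem to choose $U$ with $F \cap U \neq \emptyset$; fix $z_* \in F \cap U$ and $\kappa > 0$ with $B(z_*, \kappa) \subset U$, and set $z_\omega := S_\omega(z_*) \in F$ so that $B(z_\omega, \kappa r_\omega) \subset S_\omega(U)$ uniformly in $\omega$. The key topological consequence of the OSC is the disjointness $\overline{S_{\omega'}(U)} \cap S_\omega(U) = \emptyset$ for distinct $\omega, \omega' \in P_\delta$: any common point would, by openness of $S_\omega(U)$, yield a point of $S_{\omega'}(U) \cap S_\omega(U)$, contradicting OSC-disjointness. Combined with $S_{X_n^\delta}(F) \subset \overline{S_{X_n^\delta}(U)}$ (which follows from the OSC-automatic inclusion $F \subset \overline{U}$), this forces the implication ``$x_n \in F \cap S_\omega(U) \Rightarrow X_n^\delta = \omega$''. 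Therefore if the orbit is $c\delta$-dense with $c := \kappa r_{\min}/2$, then for each $\omega$ some $x_n$ lies within $c\delta < \kappa r_\omega$ of $z_\omega$ and thus in $F \cap S_\omega(U)$, so the chain has visited $\omega$ by that time. Consequently $\tau_{\textnormal{cov}} \leq W_{c\delta, v_0}$ pointwise, yielding the upper bound. The principal obstacle is precisely this geometric step: securing the uniform interior ball $B(z_\omega, \kappa r_\omega) \subset S_\omega(U)$ centred at a point of $F$ (which is exactly what Schief's theorem provides), and exploiting the topological disjointness above to evade boundary pathologies that would otherwise blur the correspondence between chain states and orbit positions.
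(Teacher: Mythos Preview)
Your proof is correct and follows essentially the same approach as the paper: the same stopping-time partition $P_\delta$, the same Markov chain obtained by tracking the $P_\delta$-prefix of the growing symbolic address, and the same use of Schief's strong open set condition to produce a uniform interior ball $B(z_*,\kappa)\subset U$ centred on $F$. Your disjointness argument $\overline{S_{\omega'}(U)}\cap S_\omega(U)=\emptyset$ is in fact a slightly cleaner justification of the implication ``$x_n\in S_\omega(U)\Rightarrow X_n^\delta=\omega$'' than the paper's appeal to disjointness of the balls $S_\i B(x,\epsilon)$, and your irreducibility argument (prepend the letters of the target word) is a streamlined version of the paper's explicit computation that $A_\delta^{L_\delta}$ is positive.
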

We will derive Proposition \ref{main2} from the combination of two results to be proved below, Proposition \ref{equiv} and Proposition \ref{chain}. The significance of (\ref{main2-eqn}) is that for each $0<\delta<r_{\min}$ and $\i_0 \in P_\delta$, $\E_{\i_0} \tau_{\textnormal{cov}}$ can be estimated by employing classical methods for bounding covering times of irreducible Markov chains. In particular, we will show that for any $0<\delta<r_{\min}$ and $\i_0 \in P_\delta$, $\E_{\i_0} \tau_{\textnormal{cov}}$ satisfies the upper and lower bounds presented in (\ref{other eqn1}) and (\ref{other eqn2}), which  will directly imply the bounds for $\E W_{\delta, v_0}$ by (\ref{main2-eqn}), up to a change in the uniform constant.


We introduce some notation. Define $\I=\{1, \ldots, N\}$, which we call the \emph{index set}. Let $\I^n=\{i_1 \ldots i_n: i_j \in \I\}$ denote the set of all words of length $n$ over the index set, $\I^*=\bigcup_{n \in \N} \I^n$ the set of all finite words over the index set and $\Sigma=\I^{\N}$ the set of all sequences over the index set. $\Sigma$ is equipped with the infinite product topology with respect to which it is compact and metrisable. If $\i \in \I^*$ and $\j \in \I^* \cup \Sigma$ we let $\i\j$ denote the concatenation of $\i$ with $\j$. Given $\i \in \I^*$, let $[\i]$ denote the cylinder set $[\i]=\{\i\j: \j \in \Sigma\}$. Cylinder sets are clopen and generate the topology on $\Sigma$.  Given $\i=i_1 \ldots i_n \in \I^*$ let $|\i|$ denote the length of the word $\i$, so that in this case $|\i|:=n$. Given $\i=i_1 \ldots i_n \in \I^*$ with $|\i| \geq 2$ denote $\i^-:= i_1 \ldots i_{n-1}$.  Given $\i=i_1i_2 \ldots \in \Sigma$ or $\i=i_1 \ldots i_{n+m} \in \I^*$ let $\i|_n:= i_1 \ldots i_n$. Given $\i =i_1 \ldots i_n\in \I^*$ let $r_\i:=r_{i_1} \cdots r_{i_n}$, $p_{\i}:=p_{i_1} \cdots p_{i_n}$ and $S_\i:= S_{i_1}\cdots S_{i_n}$.

It is not difficult to show that for every $\i \in \Sigma$ the limit
\[\pi(\i):=\lim_{n \to \infty} S_{i_1}\cdots S_{i_n}v\]
exists for every $v \in \mathbb{R}^d$ and that moreover the limit is independent of the starting point $v$. This \emph{coding map} $\pi \colon \Sigma \to \mathbb{R}^d$ is continuous and its image $\pi(\Sigma)$ is precisely $F$. (However, in cases where $\overline{T_iU}\cap \overline{T_jU} \neq \emptyset$ for some $i \neq j$ the coding map $\pi$ can fail to be injective.)
Obviously, for every $\i \in \Sigma$ and $n \geq 1$ we have $\i \in [\j]$ where $\j$ is the word corresponding to the first $n$ symbols of $\i$. Since $\pi \colon \Sigma \to F$ is surjective, for every $x \in F$ and $n \geq 1$ there exists at least one word $\j \in \I^*$ with $|\j|=n$ such that $x \in \pi([\j])=S_{\j}F$.

For each $\delta \in (0,r_{\min})$, define a subset of $\I^*$ by
$$P_{\delta}=\{\i \in \I^*: r_{\i} \leq \delta < r_{\i^-}\}.$$
Note that since $\delta< r_{\min}$, if $r_{\i} \leq \delta$ then necessarily $|\i| \geq 2$ and hence $\i^-$ is well defined. It is easy to see that $\{[\i]: \i \in P_{\delta}\}$ is a finite partition of $\Sigma$. Therefore, for any $x \in F$ there exists at least one word $\i \in P_\delta$ such that $x \in S_\i F$.  Let $N_{\delta}=|P_{\delta}|$, where $|P_{\delta}|$ denotes the cardinality of the set $P_{\delta}$. We claim that $\delta^{-s} \leq N_{\delta} \leq r_{\min}^{-s}\delta^{-s}$. To see this let $\mathbb{P}$ denote the Bernoulli probability measure on $\Sigma$ defined by $\mathbb{P}([\i])=r^s_\i$ for every $\i \in \I^*$. Then $\sum_{\i \in P_\delta} \mathbb{P}([\i])=1$ since $\{[\i]\colon \i \in P_\delta\}$ is a partition, which is to say $\sum_{\i \in P_\delta}r_\i^s=1$. Therefore $r_{\min}^s\delta^sN_\delta < 1 \leq \delta^sN_\delta$ which proves the claim.


We say that a list of words $\j_1,\ldots,\j_k \in \I^*$ \emph{visits} the cylinder set $[\i]\subset \Sigma$ if at least one of the words $\j_i$ satisfies $[\j_i] \subseteq [\i]$. We will show that instead of keeping track of which regions of the attractor are visited by the chaos game algorithm, we can keep track of which cylinder sets in $\{[\i]: \i \in P_\delta\}$ are visited by a symbolic analogue of the algorithm. This is made precise in the following proposition.

\begin{prop} \label{equiv}
There exist $\kappa>0$ and $\Delta>0$ depending only on $(S_1,\ldots,S_N)$ having the following property. Let $v_0 \in F$ be arbitrary, $\delta \in (0, r_{\min})$ and choose any $\i_0 \in P_\delta$ such that $v_0 \in S_{\i_0} F$:
\begin{enumerate}[(i)]
\item
If $\i_0, i_1\i_0, \ldots, i_n\ldots i_1 \i_0$ visits every cylinder in $\{[\i]:\i \in P_{\delta}\}$ then the set \\ $\{v_0, S_{i_1}v_0, \ldots, S_{i_n} \cdots S_{i_1} v_0\}$ is $2\delta\Delta$-dense in $F$.
\item
If $\{v_0, S_{i_1}v_0, \ldots, S_{i_n} \cdots S_{i_1} v_0\}$ is $\kappa\delta$-dense in $F$ then $\i_0, i_1\i_0, \ldots, i_n\ldots i_1 \i_0$ visits each cylinder in $\{[\i]: \i \in P_{\delta}\}$.
\end{enumerate}
\end{prop}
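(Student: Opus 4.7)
The plan is to use the OSC to set up a dictionary between orbit points visiting the open region $S_\i U$ and symbolic words having $\i$ as a prefix. The two geometric ingredients are that each piece $S_\i F$ has diameter at most $r_\i\,\mathrm{diam}(F) \leq \delta\,\mathrm{diam}(F)$, and that each $S_\i U$ contains a ball of radius of order $r_\i$ centered at a point of $F$. The latter relies on Schief's theorem, which in $\R^d$ allows us to choose the open set $U$ so that $U\cap F\neq\emptyset$; I assume this from here.

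For (i): Given $z \in F$, pick $\i \in P_\delta$ with $z \in S_\i F$, which is possible since $\{[\i] : \i \in P_\delta\}$ partitions $\Sigma$ and $\pi$ is surjective onto $F$. By hypothesis some $\j_k := i_k \cdots i_1 \i_0$ satisfies $[\j_k] \subseteq [\i]$, i.e.\ $\i$ is a prefix of $\j_k$. Pushing through the IFS, the $k$-th orbit point lies in $S_{\j_k} F \subseteq S_\i F$, hence within distance $r_\i\,\mathrm{diam}(F) \leq \delta\,\mathrm{diam}(F)$ of $z$. Taking $\Delta := \mathrm{diam}(F)/2$ completes this direction.

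For (ii): Fix $y_0 \in F\cap U$ and $\rho > 0$ with $B(y_0,\rho)\subseteq U$. Then $B(S_\i y_0, r_\i\rho)\subseteq S_\i U$, and since $r_\i > r_{\min}\delta$ for every $\i \in P_\delta$, setting $\kappa := r_{\min}\rho$ gives $B(S_\i y_0, \kappa\delta)\subseteq S_\i U$. If the orbit is $\kappa\delta$-dense, then for each $\i\in P_\delta$ some orbit point $x_k$ lies in this ball and hence in $S_\i U$. To convert this into a visit of $[\i]$, I would combine two observations: first, the inclusion $F\subseteq\overline{U}$, which follows from the invariance $\bigcup_i S_i\overline{U}\subseteq\overline{U}$ and the minimality of the attractor, yields $x_k \in \overline{S_{\mathtt{l}} U}$ where $\mathtt{l}$ is the length-$|\i|$ prefix of $\j_k$; second, the OSC disjointness of $S_{\mathtt{m}} U$ at level $|\i|$ together with the openness of $S_\i U$ gives $S_\i U \cap \overline{S_{\mathtt{m}} U} = \emptyset$ whenever $\mathtt{m}\neq\i$. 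These force $\mathtt{l} = \i$, so $[\j_k]\subseteq[\i]$ and the cylinder $[\i]$ is visited at time $k$.

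The main subtlety is the length issue needed to form the length-$|\i|$ prefix of $\j_k$: one needs $|\j_k| \geq |\i|$. I resolve this by observing that because $\j_k$ ends with the suffix $\i_0\in P_\delta$, we have $r_{\j_k}\leq r_{\i_0}\leq\delta$; on the other hand, if $\j_k$ were a proper prefix of $\i$ it would also be a prefix of $\i^-$, giving $r_{\j_k}\geq r_{\i^-}>\delta$, a contradiction. Hence $|\j_k|\geq|\i|$ and the OSC argument above applies.
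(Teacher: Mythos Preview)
Your approach is essentially the paper's: invoke Schief's strong OSC to place a ball $B(y_0,\rho)\subseteq U$ with $y_0\in F$, use the diameter bound for part~(i), and use OSC disjointness of images for part~(ii). Part~(i) is correct and in fact slightly cleaner than the paper's version, which routes through auxiliary centres $S_\i x$ rather than directly through $\mathrm{diam}(S_\i F)$.

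There is, however, a genuine gap in your length argument for part~(ii). You correctly show $r_{\j_k}\leq\delta$ and that this rules out $\j_k$ being a proper prefix of $\i$. But ``$\j_k$ is not a proper prefix of $\i$'' does \emph{not} by itself give $|\j_k|\geq|\i|$: with unequal contraction ratios $P_\delta$ can contain words far longer than $|\i_0|$, so $\j_k$ may simply be shorter than $\i$ with no prefix relation between them at all. What is missing is that the geometric information you already have forces a prefix relation. From $x_k\in S_\i U$ (open) and $x_k\in S_{\j_k}F\subseteq\overline{S_{\j_k}U}$ one obtains $S_\i U\cap S_{\j_k}U\neq\emptyset$, and then iterated OSC says one of $\i$, $\j_k$ is a prefix of the other; \emph{now} your $r$-value argument finishes. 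The paper sidesteps the issue entirely by working at the $P_\delta$ level rather than at fixed length $|\i|$: take the unique prefix $\j_k'\in P_\delta$ of $\j_k$ (which exists precisely because $r_{\j_k}\leq\delta$), observe $x_k\in\overline{S_{\j_k'}U}$, and use pairwise disjointness of $\{S_\j U:\j\in P_\delta\}$ together with openness of $S_\i U$ to conclude $\j_k'=\i$ directly.
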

 
\begin{proof}
Since the IFS $(S_1, \ldots, S_N)$ satisfies the OSC, by a result of A. Schief (\cite{Sc94}) it also satisfies the \emph{strong open set condition} (SOSC), that is, there exists a bounded open set $U$ such that $\bigcup_{i=1}^N S_iU \subset U$ where the union is disjoint and $U \cap F \neq \emptyset$. Let $\Delta:= \textnormal{diam} F$. It follows that there exists $x \in F$ and $0<\epsilon< \Delta$ with $B(x,\epsilon) \subset U$. In particular for $\i, \j \in P_{\delta}$ with $\i \neq \j$, the balls $S_{\i}B(x, \epsilon)$ and $S_{\j}B(x, \epsilon)$ are disjoint. If $y \in F$ is arbitrary then we may choose $\i \in P_\delta$ such that $y \in S_\i F$. Since $S_{\i}x, y \in S_{\i} F$ it then follows that $|S_{\i}x-y| \leq r_{\i} \Delta$. We have shown that
\begin{eqnarray}F \subset \bigcup_{\i \in P_{\delta}} B( S_{\i}x,r_{\i}\Delta) . \label{balls} \end{eqnarray}

Now, to prove (i), fix $\i_0, i_1\i_0, \ldots, i_n\ldots i_1 \i_0$ satisfying the hypothesis of (i) and consider an arbitrary $\i \in P_{\delta}$. By assumption there exists $k$ satisfying $0 \leq k \leq n$ such that $[i_k \ldots i_1 \i_0] \subset [\i]$. Thus $S_{i_k} \cdots S_{i_1}v_0 \in S_\i F$, and in particular $S_{i_k} \cdots S_{i_1}v_0 \in B(S_\i x, r_\i \Delta)$. Moreover, since for any $y \in B(S_\i x, r_\i \Delta)$ we have
$$|y- S_{i_k} \cdots S_{i_1} v_0| \leq |y-S_\i x|+|S_\i x-S_{i_k} \cdots S_{i_k} \cdots S_{i_1} v_0| <r_\i \Delta +r_\i \Delta \leq 2\delta \Delta,$$
it follows that $B(S_{\i}x, r_\i \Delta) \subset B(S_{i_k} \cdots S_{i_1}v_0, 2\delta\Delta)$. Since $\i \in P_{\delta}$ was arbitrary, it follows that
$$F \subset \bigcup_{\i \in P_{\delta}} B( S_{\i}x,r_{\i}\Delta) \subset  \bigcup_{k=0}^n B(S_{i_k} \cdots S_{i_1}v_0, 2\delta\Delta),$$
in other words, $v_0, S_{i_1}v_0, \ldots, S_{i_n} \cdots S_{i_1} v_0$ is $2\delta\Delta$-dense in $F$ as required to establish (i).

To prove (ii), define $\kappa:= r_{\min} \epsilon$. For each $\i \in P_{\delta}$ we have $S_{\i}x \in S_{\i}F \subset F$  and $S_{\i}(B(x,\epsilon))=B(S_{\i}x,r_\i \epsilon)$, and we note also that  $\delta \kappa = \delta r_{\min}\epsilon \leq r_{\i}\epsilon$ where we have used the definition of $P_\delta$. Thus if $\{v_0, S_{i_1}v_0, \ldots, S_{i_n} \cdots S_{i_1} v_0\}$ is $\kappa\delta$-dense in $F$ then for each $\i \in P_{\delta} \setminus \{\i_0\}$ there must exist $k$ satisfying $0 \leq k \leq n$ such that $S_{i_k} \cdots S_{i_1}v_0$ is $\kappa\delta$-close to $S_{\i}x$ and therefore satisfies $S_{i_k} \cdots S_{i_1}v_0 \in B(S_{\i}x,\kappa \delta) \subseteq B(S_{\i}x,r_\i \epsilon)=S_{\i}B(x,\epsilon)$. By the definition of $\epsilon$ we know that  $\{S_\i B(x,\epsilon)\}_{\i \in P_\delta}$ are disjoint balls, so we necessarily have $[i_k \ldots i_1 \i_0 ] \subset [\i]$ and therefore $\i_0, i_1\i_0, \ldots, i_n\ldots i_1 \i_0$ visits  every cylinder $[\i]$ such that $\i \in P_\delta$. This completes the proof.
\end{proof}


Proposition \ref{equiv} is key to the construction of the Markov chain $(X_n^{\delta})_{n=0}^\infty$, which we are now ready to provide details of:
\begin{prop} \label{chain} Let $\delta \in (0,r_{\min})$. Define a square matrix $A_\delta=[a_{\i,\j}]_{\i,\j \in P_\delta}$ of dimension $N_{\delta}$ by
\[a_{\i,\j}:=\left\{\begin{array}{cl}p_i &\text{if  $[i\i] \subset [\j]$ }\\ 0&\text{otherwise,}\end{array}\right.\]
and define a vector $\pi_\delta \in \mathbb{R}^{N_\delta}$ by $\pi_\i:=p_\i$. Then:
\begin{enumerate}[(a)]
\item $A_\delta$ is a row stochastic matrix,
\item $\pi_\delta$ is a left stationary vector, i.e. $\pi_\delta A_\delta=\pi_\delta$,
\item $ A_\delta$ is irreducible.
\end{enumerate}
\end{prop}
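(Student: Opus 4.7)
My plan is to leverage two structural facts about the partition $\{[\i] : \i \in P_\delta\}$. First, every word $\k \in \I^*$ with $r_\k \leq \delta$ admits a unique prefix in $P_\delta$, namely the shortest prefix whose $r$-value is at most $\delta$. Second, for any $\j \in P_\delta$ the word $\j$ itself is the only prefix of $\j$ lying in $P_\delta$, since every proper prefix of $\j$ is a prefix of $\j^-$ and hence has $r$-value at least $r_{\j^-}>\delta$. Part (a) then follows immediately: fixing $\i \in P_\delta$, for each $i \in \I$ the word $i\i$ has $r_{i\i} \leq r_i \delta < \delta$, so $[i\i]$ lies in a unique cell $[\j(i)]$ of the partition and contributes $p_i$ to the row sum, giving $\sum_{\j} a_{\i,\j} = \sum_{i\in\I}p_i = 1$.

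For (b), I would fix $\j = j_1\ldots j_m \in P_\delta$ and gather $Q_\j := \{\i \in P_\delta : j_2\ldots j_m \text{ is a prefix of } \i\}$, so that $\sum_\i p_\i a_{\i,\j} = p_{j_1} \sum_{\i \in Q_\j} p_\i$. The central step is to show that $\{[\i] : \i \in Q_\j\}$ partitions the cylinder $[j_2\ldots j_m] \subseteq \Sigma$: dividing the inequality $r_{\j^-}>\delta$ by $r_{j_1}<1$ gives $r_{j_2\ldots j_{m-1}} > \delta$, ruling out any proper prefix of $j_2\ldots j_m$ from $P_\delta$, so the $P_\delta$-prefix of any sequence in $[j_2\ldots j_m]$ must extend $j_2\ldots j_m$. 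Therefore $\sum_{\i \in Q_\j} p_\i$ equals the Bernoulli mass $p_{j_2 \ldots j_m}$, and multiplication by $p_{j_1}$ delivers $p_\j$.

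For (c), I plan to use a commutation lemma: for every letter $a \in \I$ and every $\k \in \I^*$ with $r_\k \leq \delta$, the $P_\delta$-prefix of $a\k$ equals the $P_\delta$-prefix of $a\mathbf{u}$, where $\mathbf{u}$ is the $P_\delta$-prefix of $\k$. This follows from $r_{a\mathbf{u}} \leq r_a \delta < \delta$, which forces the $P_\delta$-prefix of $a\k$ to have length at most $1+|\mathbf{u}|$ and hence be a common prefix of both $a\k$ and $a\mathbf{u}$; uniqueness of $P_\delta$-prefixes then closes the argument. Iterating the lemma along the sequence of prepends $j_m, j_{m-1}, \ldots, j_1$ transforms any starting state $\i$ into the $P_\delta$-prefix of $\j\i$, which equals $\j$ since $\j \in P_\delta$ is itself a prefix of $\j\i$. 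This realization has probability $p_{j_m}\cdots p_{j_1} = p_\j > 0$, yielding irreducibility. The main technical obstacle is the commutation lemma; without it one would have to grapple directly with how the variable-length truncation at each step interacts with the letters previously prepended, which quickly becomes unwieldy.
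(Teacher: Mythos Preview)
Your proof is correct. Parts (a) and (b) are essentially the paper's argument in different clothing: where the paper isolates a ``Markov partition property'' (Lemma~\ref{markov}) and an existence/uniqueness lemma (Lemma~\ref{le:handy}), you work directly with the notion of the unique $P_\delta$-prefix of a word, which is equivalent. Your observation that no proper prefix of $j_2\ldots j_m$ can lie in $P_\delta$ is exactly the content of the paper's Lemma~\ref{markov}, and the conclusion $\sum_{\i\in Q_\j}p_\i = p_{j_2\ldots j_m}$ is what the paper obtains from $[\j]=\bigcup_k[j_1\i_k]$.

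Part (c) is where you genuinely diverge. The paper proves the stronger statement that $A_\delta^{L}$ is strictly positive for $L=L_\delta=\max_{\i\in P_\delta}|\i|$: it fixes $\i,\j$, pads $\j$ out to length $L$ with arbitrary symbols $j_{n+1},\ldots,j_L$, and runs a descending induction tracking for each intermediate state $\k_t$ how many of its leading symbols agree with $j_t,j_{t+1},\ldots$. Your commutation lemma accomplishes the same task more economically: it says that prepending a letter $a$ and then truncating to $P_\delta$ commutes with first truncating to $P_\delta$, which lets you reach $\j$ from any $\i$ in exactly $|\j|$ steps with probability $p_\j>0$. This gives irreducibility directly without the padding or the bookkeeping quantities $\ell_t$. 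The paper's route yields aperiodicity as a byproduct (positivity of a fixed power), which your argument does not, but irreducibility is all that the proposition claims.
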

In order to prove Proposition \ref{chain} we require two preliminary lemmas concerning $P_\delta$.

\begin{lma}
Let $\delta \in (0, r_{\min})$. The \emph{Markov partition property} holds: for every $\i = i_1\cdots i_n \in P_\delta$ there exist $\j_1,\ldots,\j_m \in P_\delta$ such that $[\i]=\bigcup_{k=1}^m [i_1 \j_k]$.
\label{markov}
\end{lma}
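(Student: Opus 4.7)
The plan is to exploit the fact, noted in the text just before the lemma, that $\{[\j]:\j\in P_\delta\}$ is a finite partition of $\Sigma$. Writing $\sigma\colon\Sigma\to\Sigma$ for the left shift, for any $\omega\in[\i]$ the shifted sequence $\sigma\omega$ lies in a unique cylinder $[\j]$ with $\j\in P_\delta$. I would then show that necessarily $|\j|\geq n-1$, which forces $\j$ to begin with $i_2 i_3\cdots i_n$, so that $i_1\j$ is a prefix of $\omega$ and $[i_1\j]\subseteq[\i]$.

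More concretely, fix $\i=i_1\cdots i_n\in P_\delta$ and, given $\omega\in[\i]$, let $\j=j_1\cdots j_\ell\in P_\delta$ be the unique word with $\sigma\omega\in[\j]$. Since $\sigma\omega$ begins with $i_2,i_3,\ldots,i_n$, we have $j_k=i_{k+1}$ for all $k\leq\min(\ell,n-1)$. The key estimate is: if $\ell\leq n-2$, then
\[
r_{\j}=r_{i_2}\cdots r_{i_{\ell+1}}\geq r_{i_2}\cdots r_{i_{n-1}}=\frac{r_{\i^-}}{r_{i_1}}>\frac{\delta}{r_{i_1}}>\delta,
\]
contradicting $\j\in P_\delta$. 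Here I use that each $r_{i_k}<1$, together with the defining property $r_{\i^-}>\delta$ of membership in $P_\delta$. Hence $\ell\geq n-1$, and $\j$ starts with $i_2\cdots i_n$, so $i_1\j$ starts with $i_1 i_2\cdots i_n=\i$.

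To finish, define
\[
\mathcal{J}:=\{\j\in P_\delta:[i_1\j]\subseteq[\i]\}=\{\j_1,\ldots,\j_m\}.
\]
The argument above shows every $\omega\in[\i]$ lies in $[i_1\j]$ for some $\j\in\mathcal{J}$, giving $[\i]\subseteq\bigcup_{k=1}^m[i_1\j_k]$. The reverse inclusion is immediate from the definition of $\mathcal{J}$. Finally, disjointness of the cylinders $[i_1\j_k]$ for distinct $k$ follows from the fact that applying $\sigma$ to $[i_1\j_k]$ gives $[\j_k]$, and the cylinders $\{[\j]:\j\in P_\delta\}$ are pairwise disjoint as they form a partition.

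I do not foresee a serious obstacle; the only subtle point is the length estimate $\ell\geq n-1$, which is where the hypothesis $\i\in P_\delta$ (rather than merely $r_\i\leq\delta$) is actually used, via the lower bound $r_{\i^-}>\delta$.
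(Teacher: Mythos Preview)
Your proof is correct and is essentially the same argument as the paper's: both hinge on showing that no $\j\in P_\delta$ can be a proper prefix of $i_2\cdots i_n$, via the estimate $r_\j\geq r_{\i^-}/r_{i_1}>\delta$. The only difference is cosmetic---you phrase the covering pointwise using the shift map $\sigma$, while the paper works directly with the cylinders $[\j_k]$ meeting $[\i']=[i_2\cdots i_n]$ and uses the ``two intersecting cylinders are nested'' fact---but the key contradiction is identical.
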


\begin{proof}
If $n=1$ then we have $[\i]=[i_1]=\bigcup_{\j \in P_\delta} [i_1\j]$ using the fact that $\bigcup_{\j \in P_\delta}[\j]=\Sigma$, so assume $n\geq 2$. Given $\i =i_1\cdots i_n\in P_\delta$ with $n \geq 2$, define $\i'=i_2\cdots i_n$. Since $\{[\j] \colon \j \in P_\delta\}$ is a partition there exist  $\j_1,\ldots,\j_m \in P_\delta$ such that $[\i'] \subseteq \bigcup_{k=1}^m [\j_k]$ and $[\i']\cap [\j_k]\neq \emptyset$ for all $k=1,\ldots,m$. We appeal to the fact that if two cylinder sets intersect then one of them contains the other. If for some $k$ the set $[\i']$ is a subset of $[\j_k]$ then either $\i'=\j_k$ or $\i'=\j_k\k$ for some finite word $\k$. In the former case we have $[\i]=[i_1\i']=[i_1\j_k]$ and the proof is complete. In the latter case we have $\i = i_1\i'=i_1\j_k\k$ so that $i_1\j_k$ is a proper prefix of $\i \in P_\delta$. This implies $r_{i_1}r_{\j_k}>\delta$ and in particular $r_{\j_k}>r_{i_1}^{-1}\delta>\delta$ contradicting that $\j_k \in P_\delta$. We conclude that $[\j_k]\subseteq [\i']$ for each $k=1,\ldots m$, so $[\i'] \subseteq \bigcup_{k=1}^m [\j_k] \subseteq [\i']$ and the result $[i_1\i']=\bigcup_{k=1}^m [i_1\j_k]$ follows. 
\end{proof}

\begin{lma}\label{le:handy}
Let $\delta \in (0, r_{\min})$. Then for every $\i \in P_\delta$ and $i\in \{1,\ldots,N\}$ there exists a unique $\j \in P_\delta$ such that $[i\i]\subseteq [\j]$. 
\end{lma}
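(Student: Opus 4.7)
The plan is to exploit two facts: that $\{[\j] : \j \in P_\delta\}$ is a partition of $\Sigma$ (established earlier in the paper), and the well-known property of cylinder sets that two cylinders either are disjoint, or one is contained in the other. I will first argue existence by exhibiting a concrete prefix of $i\i$ that lies in $P_\delta$, and then deduce uniqueness from the partition property.

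For existence, I would examine the successive prefixes of the word $i\i$. Since $\i \in P_\delta$ we have $r_\i \leq \delta$, hence the full word satisfies $r_{i\i} = r_i r_\i < \delta$ (using $r_i < 1$). On the other hand the single-letter prefix $i$ has $r_i \geq r_{\min} > \delta$ by our standing assumption on $\delta$. Therefore there is a smallest index $k \geq 2$ such that the prefix $\j := (i\i)|_k$ of $i\i$ satisfies $r_\j \leq \delta$. By minimality of $k$, its predecessor $\j^-$ satisfies $r_{\j^-} > \delta$, so $\j \in P_\delta$. Because $\j$ is a prefix of $i\i$, we automatically have $[i\i] \subseteq [\j]$.

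For uniqueness, suppose $\j, \j' \in P_\delta$ both satisfy $[i\i] \subseteq [\j]$ and $[i\i] \subseteq [\j']$. Then $[\j] \cap [\j'] \supseteq [i\i] \neq \emptyset$, and since $\{[\k] : \k \in P_\delta\}$ is a partition of $\Sigma$, this forces $\j = \j'$. Alternatively one can argue directly: both $\j$ and $\j'$ must be prefixes of $i\i$, so one is a prefix of the other; but if $\j$ were a proper prefix of $\j'$ then $\j$ would be a prefix of $(\j')^-$, giving $r_{(\j')^-} \leq r_\j \leq \delta$, contradicting $\j' \in P_\delta$.

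There is no real obstacle here; the lemma is essentially a bookkeeping statement about the stopping-time structure of $P_\delta$. The only minor point that needs attention is ensuring that the prefix $\j$ produced in the existence step has length at least $2$ so that $\j^-$ is defined, which is exactly why the paper restricts to $\delta < r_{\min}$.
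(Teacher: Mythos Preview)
Your proof is correct. The uniqueness argument is identical to the paper's, but your existence argument differs: you construct $\j$ directly as the shortest prefix of $i\i$ whose contraction ratio drops to $\delta$ or below, using only the monotonicity of $k \mapsto r_{(i\i)|_k}$ and the observation that $r_i > \delta$ while $r_{i\i} \leq \delta$. The paper instead takes any $\j \in P_\delta$ with $[i\i]\cap[\j]\neq\emptyset$, invokes the Markov partition property (Lemma~\ref{markov}) to write $[\j]=\bigcup_k [i\j_k]$ with $\j_k \in P_\delta$, and then uses the partition property to force $\i=\j_k$ for some $k$. Your route is more elementary and self-contained, since it does not rely on Lemma~\ref{markov}; the paper's route has the mild advantage of exercising the Markov partition structure that is used repeatedly in the subsequent arguments.
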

\begin{proof}
Fix such an $i$ and $\i$. To demonstrate the existence of $\j$ we observe that by the partition property there must exist $\j \in P_\delta$ such that $[i\i] \cap [\j]\neq \emptyset$. Writing $[\j]=\bigcup_{k=1}^m [i\j_k]$ using the Markov partition property (Lemma \ref{markov}) we see that there exists $k$ such that $[i\i]\cap [i\j_k]\neq \emptyset$. By the partition property this is only possible if $\i=\j_k$ and we deduce that $[i\i]\subseteq [\j]$. This proves existence. To obtain uniqueness we observe that if distinct $\j_1,\j_2 \in P_\delta$ satisfy $[i\i] \subseteq [\j_1]$ and $[i\i] \subseteq [\j_2]$ then $[\j_1]\cap [\j_2]\neq \emptyset$ and the partition property is contradicted. The lemma is proved.
\end{proof}

\vspace{3mm}

\begin{proof}[Proof of Proposition \ref{chain}]
To see that $A_\delta$ is row stochastic we observe that for each $\i \in P_\delta$ and each $i=1,\ldots,N$ there exists a unique $\j \in P_\delta$ such that $[i\i]\subseteq [\j]$ by Lemma \ref{le:handy}. This implies there exists a unique $\j \in P_\delta$ such that $a_{\i,\j}=p_i$. It follows that every $p_1,\ldots,p_N$ occurs once in the row of $A_\delta$ corresponding to $\i$ and the remaining entries in that column are zero, so $A_\delta$ is row stochastic as claimed.

That $\pi_\delta$ is a stochastic vector follows directly from the fact that $P_\delta$ is a partition of $\Sigma$. To verify the equation $\pi_\delta A_\delta=\pi_\delta$, let $\j \in P_\delta$ be arbitrary and using Lemma \ref{markov} we can write $[\j]=\bigcup_{k=1}^m [i\i_k]$ where $\i_1,\ldots,\i_m \in P_\delta$ and $i$ is the first symbol of $\j$. We obtain
\[ \sum_{\i \in P_\delta} \pi_\i a_{\i,\j} =\sum_{\substack{\i \in P_\delta \\ [i\i] \subseteq [\j]}}p_\i p_i  =\sum_{k=1}^m p_{\i_k} p_i  = p_\j=\pi_\j\]
as required, where the penultimate equation follows from $[\j]=\bigcup_{k=1}^m [i\i_k]$ and where we have used the fact that if $[i\i] \subseteq [\j]$ then $\i$ is necessarily equal to some $\i_k$ by the partition property.

To show that $A_\delta$ is irreducible, it is sufficient to show that there exists $L>0$ such that $A_{\delta}^L$ is a positive matrix. We will show that this is true for $L=L_{\delta}:= \max_{\i \in P_{\delta}}\{|\i|\}$. Let $\i,\j \in P_\delta$ be arbitrary with $\j=j_1,\ldots,j_n$, say, and if $n<L$ let $j_{n+1},\ldots,j_{L} \in \{1,\ldots,N\}$ be arbitrary. Define $\k_{L+1}:=\i$. By a simple inductive application of Lemma \ref{le:handy} starting at $t=L$ and descending to $t=1$ we may choose $\k_1,\ldots,\k_L \in P_\delta$ such that for all $t=1,\ldots,L$ we have $[j_t\k_{t+1}] \subseteq [\k_{t}]$. Define $\ell_t:=\min\{L+1-t,|\k_t|\}$ for each $t=1,\ldots,L+1$. We claim that for each $t=1,\ldots,L+1$ the first $\ell_t$ symbols of $\k_t$ are $j_{t},\ldots,j_{t-1+\ell_t}$ in that order. This statement is clearly true for $t=L+1$, so let us assume its truth for some $t \in \{2,\ldots,L+1\}$ and deduce its truth for $t-1$. Since $[j_{t-1}\k_{t}] \subseteq [\k_{t-1}]$ we have $ |\k_{t-1}|\leq 1+|\k_t|$ and therefore $\ell_{t-1}=\min\{L+2-t,|\k_{t-1}|\} \leq  \min \{L+2-t,1+|\k_t|\}  = 1+\ell_t$. The relation $[j_{t-1}\k_{t}] \subseteq [\k_{t-1}]$  also implies that the first $|\k_{t-1}|$ symbols of $j_{t-1}\k_t$ are precisely the word $\k_{t-1}$. Since the first $1+\ell_t$ symbols of $j_{t-1}\k_t$ are $j_{t-1}j_t\cdots j_{t-1+\ell_t}$, the first $\ell_{t-1} \leq \min\{|\k_{t-1}|,1+\ell_t\}$  symbols of $\k_{t-1}$ must be $j_{t-1}\cdots j_{t-2+\ell_{t-1}}$. This is precisely what is required for the claim to be true in the case $t-1$. The claim follows by induction. 

Applying the claim with $t=1$ it follows that the first $\ell_1=\min\{L,|\k_1|\}=|\k_1|$ symbols of $\k_1$ are $j_1,\ldots,j_{\ell_1}$. If $\ell_1<n$ then $[\j]$ is a proper subset of $[\k_1]$ and if $\ell_1>n$ then $[\k_1]$ is a proper subset of $[\j]$, but both of these contradict the partition property of $P_\delta$ and we conclude that $\ell_1=n$ and therefore $\k_1=\j$. The relation  $[j_t\k_{t+1}] \subseteq [\k_{t}]$ for each $t$ implies that $a_{\k_{t+1},\k_{t}}>0$ for all $t=1,\ldots,L$. Since $\k_1=\j$ and $\k_{L+1}=\i$ it follows that $a_{\i,\k_L}a_{\k_L,\k_{L-1}}\cdots a_{\k_2,\j}>0$. This is precisely what is needed to show that the entry of $A_\delta^L$ in position $(\i,\j)$ is positive. Since $\i$ and $\j$ were arbitrary the proof of (c) is complete.\end{proof}

We may now prove Proposition \ref{main2}:

\begin{proof}[Proof of Proposition \ref{main2}]
Let $\kappa>0$ be as given by Proposition \ref{equiv} and for each $\delta \in (0,r_{\min})$ let $(X_n^\delta)_{n=0}^\infty$ be the irreducible Markov chain on the state space $P_\delta$ which is induced by the transition matrix $A_\delta$ defined in Proposition \ref{chain}. Given $v_0 \in F$, choose any $\i_0 \in P_\delta$ such that $v_0 \in S_{\i_0}F$. For every $\i=i_1i_2 \ldots \in \Sigma$ define 
$$\W_{\delta, \i_0}(\i):=\inf\{m \geq 1: \forall \j \in P_{\delta}\; \exists n \in \N \; \textnormal{such that} \; 1 \leq n \leq m \; \textnormal{and} \; [i_n \ldots i_1 \i_0] \subset [\j]\},$$
so that $\W_{\delta,\i_0}(\i) \in \mathbb{N} \cup \{+\infty\}$.  If $\W_{\delta, \i_0}(\i)=n \in \mathbb{N}$ then by definition $\i_0, i_1 \i_0, \ldots, i_n \cdots i_1 \i_0$ have visited all cylinders in $\{[\j]: \j \in P_\delta\}$. By Proposition \ref{equiv}(i) it follows that the set $\{v_0, S_{i_1}v_0, \ldots, S_{i_n} \cdots S_{i_1}v_0\}$ is $2\delta\Delta$-dense in $F$, so we have $W_{2\delta\Delta,v_0}(\i) \leq n=\W_{\delta, \i_0}(\i)$. Similarly, we obtain $\W_{\delta, \i_0}(\i) \leq W_{\kappa \delta, v_0}(\i)$ from Proposition \ref{equiv}(ii). Hence
\begin{eqnarray}
W_{2\Delta\delta, v_0}(\i) \leq \W_{\delta,\i_0}(\i) \leq W_{\kappa \delta, v_0}(\i). \label{equiv eqn}
\end{eqnarray}
By the definition of $(X_n^\delta)_{n=0}^\infty$ it is clear that $\E_{\i_0}\tau_{\textnormal{cov}}=\E  \W_{\delta,\i_0}$, therefore it follows that Proposition \ref{main2} holds for $(X_n^\delta)_{n=0}^\infty$ where the uniform constant $c \in (0,1)$ can be taken to be $c=\min\{\frac{1}{2\Delta}, \kappa\}$. 

 \end{proof}

\vspace{2mm}

\section{Bounds on the covering time}

Fix $0<\delta<r_{\min}$. In order to prove Theorem \ref{natural} it suffices, via Proposition \ref{main2}, to obtain estimates on the expected covering time $\E_{\i_0} \tau_{\textnormal{cov}}$ for all $\i_0 \in P_\delta$. To this end, we will make extensive use of a family of bounds on the expected covering time of irreducible Markov chains derived from the so called ``Matthews method'' \cite{LePe17,Ma88}. Roughly speaking, this type of bound reduces the task of estimating the expected covering time to only having to estimate the expected time for the Markov chain to travel between a pair of states. In particular, for $\i \in P_\delta$ we define the hitting time 
$$\tau_\i:=\min\{t \geq 0: X^\delta_t=\i\},$$
that is, the first time that the state $\i$ is visited by the Markov chain, and denote by $\E_\i \tau_\j$ the expected hitting time of $\j \in P_\delta$ given that $X^\delta_0=\i$. Then, provided the expected hitting time between a pair of states is fairly homogeneous over a (large part of) the state space, the Matthews method exploits the fact that states can be visited by the Markov chain in many different orders to deduce that the expected covering time is approximately proportional to the logarithm of the cardinality of the state space times the typical expected hitting time between a pair of states. Translated through the symbolic coding introduced in the previous section, the requirement that the expected hitting time be fairly homogenous over a sufficiently large part of the state space turns out to correspond to the situation that the probability vector $(p_1,\ldots,p_N)$ is chosen in such a way that (\ref{t thing}) is not satisfied for a unique index. In this situation the measures $m=\sum_{i=1}^N p_i (S_i)_*m$ of many of the similarly sized pieces $\{S_\i F\}_{\i \in P_\delta}$ are approximately comparable, which implies that expected hitting times between many pairs of states in $P_\delta$ are also roughly uniform. On the other hand, when the expected hitting time between pairs of states varies substantially according to the pair of states which are chosen --  which is the case when (\ref{t thing}) is satisfied uniquely and therefore the measure $m$ is far from being uniformly distributed over any large subcollection of  pieces $\{S_\i F\}_{\i \in P_\delta}$ -- Matthews' method yields bounds which are less sharp, which accounts for the gap between the upper and lower estimates for the rate of growth of $\E W_{\delta, v_0}$ in (\ref{other eqn2}). We will provide precise statements for the ``Matthews method'' bounds which we use in Propositions \ref{matthewsu}, \ref{matthewsu_A} and \ref{matthewsl2}, each of which  will appear directly preceding the proof of the corresponding bound from the part of Theorem \ref{natural} to which it pertains.

 There are a couple more notions related to the covering and hitting times which will be useful in our analysis. Firstly, for $\i \in P_\delta$ we define
$$\tau_\i^+:=\min\{t \geq 1: X^\delta_t=\i\},$$
which we call the first return time to the state $\i$. 


In order to introduce the second one,  it is helpful to visualise the Markov chain $(X_n^\delta)_{n=1}^\infty$ in the following way. At each transition we append a new bit $i \in \{1, \ldots, N\}$ with probability $p_i$ on the left of the current state, say $\i=i_1 \ldots i_n$. There is now a unique way that we can delete the tail of $i\i=ii_1 \ldots i_n$ to yield a new word $\j=ii_1 \ldots i_{n-m} \in P_{\delta}$. Then, $\j$ is the new current state of the chain. In this sense, our Markov chain $(X_n^\delta)_{n=0}^\infty$ is closely related to the Markov chain which describes observing patterns of heads and tails in coin tossing, and we can exploit this connection to adapt techniques which were used in \cite[\S 11.3.3]{LePe17} to compute waiting times for all patterns of a fixed length when tossing a fair coin.

Let $w_{\i}$ denote the first time that $\i$ appears using all new bits, that is, with no overlap with the initial state. This random variable is easier to study than the waiting time $\tau_{\i}$ since it does not depend on the initial state.  There are two trivial observations which will be useful: (i) $w_{\i} \geq \tau_{\i}$ for all $\i \in P_{\delta}$ and (ii) since $w_{\i}$ does not depend on the initial state, $\E w_{\i} \geq \E_{\i}\tau_{\i}^+$, where we have purposefully suppressed the dependence on the initial state in $\E w_{\i}$.

Finally, we fix some extra notation which will be used throughout the proofs. We will write $A \lesssim B$ if $A \leq cB$ for some constant $c$ which depends only on the parameters fixed by the hypothesis of the result being proved, which in our case may be the IFS itself and the probability vector $(p_1,\ldots,p_N)$, but crucially it will never depend on $\delta$, $v_0$ or $\i_0$. Similarly we write $A \gtrsim B$ if $B \lesssim A$ and $A \approx B$ if both $A \lesssim B$ and $B \lesssim A$.

 Also recall that in the proof of Proposition \ref{chain} we introduced the notation
$$L_{\delta}= \max\{|\i|: \i \in P_{\delta}\}=\left\lceil \frac{\log \delta}{\log r_{\min}} \right\rceil.$$
Additionally we will denote
$$\ell_\delta= \min\{|\i|: \i \in P_{\delta}\}=\left\lceil \frac{\log \delta}{\log r_{\max}} \right\rceil.$$

\subsection{Proofs of upper bounds} \hfill \\

First, we obtain the upper bound on $\E W_{\delta,v_0}$ in the case where (\ref{t thing}) is not maximised uniquely in $\{1, \ldots, N\}$, that is, we settle the upper bound in (\ref{other eqn2}). For this we will appeal to Matthews' original upper bound on the expected covering  time of an irreducible Markov chain (see \cite[Theorem 11.2]{LePe17} based on \cite{Ma88}).

\begin{prop}\label{matthewsu}
Fix $0<\delta<r_{\min}$. Then
$$\max_{\i\in P_\delta} \E_{\i} \tau_{\textnormal{cov}} \leq \max_{\i,\j\in P_\delta} \E_\i \tau_\j \left(1+\frac{1}{2}+ \cdots + \frac{1}{|P_\delta|}\right).$$
\end{prop}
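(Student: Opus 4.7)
The plan is to apply the classical Matthews random-ordering argument, whose prerequisites are already in hand: by Proposition \ref{chain} the chain $(X_n^\delta)$ is irreducible on the finite state space $P_\delta$, so every expected hitting time $\mathbb{E}_{\i'}\tau_{\j'}$ is finite.

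I would fix an arbitrary initial state $\i \in P_\delta$, write $M:=|P_\delta|$ and $t_{\max}:=\max_{\i',\j'\in P_\delta}\mathbb{E}_{\i'}\tau_{\j'}$, and introduce a uniformly random bijection $\sigma\colon\{1,\ldots,M\}\to P_\delta$ chosen independently of the Markov chain. For $k=0,1,\ldots,M$ let $T_k$ be the first time by which every state among $\sigma(1),\ldots,\sigma(k)$ has been visited, with the convention $T_0:=0$; then $T_M=\tau_{\textnormal{cov}}$. The first observation is that the increment $T_k-T_{k-1}$ is nonzero if and only if $\sigma(k)$ is the \emph{last} of $\sigma(1),\ldots,\sigma(k)$ to be visited, an event I denote by $A_k$.

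The heart of the matter is the bound $\mathbb{E}_{\i}(T_k-T_{k-1})\leq t_{\max}/k$. To establish it I would condition on the trajectory of $(X_n^\delta)$ (which determines every hitting time $\tau_{\j'}$) together with the unordered set $S_k:=\{\sigma(1),\ldots,\sigma(k)\}$. Since $\sigma$ is independent of the chain and uniformly random, the conditional law of $\sigma(k)$ given $S_k$ is uniform on $S_k$, and exchangeability of hitting times within $S_k$ then yields $\mathbb{P}(A_k)=1/k$. On the event $A_k$, the strong Markov property applied at the stopping time $T_{k-1}$ (at which moment the chain occupies the most recently visited element of $S_{k-1}$) bounds the conditional expectation of $T_k-T_{k-1}$ by $t_{\max}$; the case $k=1$ is immediate since $T_1\le\tau_{\sigma(1)}$ and $\mathbb{E}_{\i}\tau_{\sigma(1)}\leq t_{\max}$.

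Summing the telescope $\tau_{\textnormal{cov}}=\sum_{k=1}^M(T_k-T_{k-1})$ and taking expectations gives $\mathbb{E}_{\i}\tau_{\textnormal{cov}}\leq t_{\max}\sum_{k=1}^M 1/k$, and since $\i$ was arbitrary the maximum over starting states inherits the same bound. I do not expect a genuine obstacle here; the only delicate point is the bookkeeping needed to make $\mathbb{P}(A_k)=1/k$ rigorous, which rests on the independence of $\sigma$ from $(X_n^\delta)$ and on the symmetry it produces. As this is verbatim \cite[Theorem 11.2]{LePe17}, I would expect the authors simply to cite that reference rather than reproduce the proof.
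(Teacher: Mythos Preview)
Your proposal is correct and, as you anticipated, the paper does not give its own proof: Proposition~\ref{matthewsu} is simply stated with a citation to \cite[Theorem~11.2]{LePe17} (based on \cite{Ma88}). Your sketch of the random-ordering argument is the standard one underlying that reference, so nothing more is needed.
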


We will require the following short lemma which describes which states are most difficult to hit and, given such a state, provides an approximate formula for its stationary probability.

\begin{lma}
 Let $i_0 \in \{1, \ldots, N\}$ satisfy $\frac{\log p_{i_0}}{\log r_{i_0}}= \max_i \frac{\log p_i}{\log r_i}=t$. Given $0<\delta<r_{\min}$ let $\i_0 \in P_\delta$ denote the unique string which is made up only of the digit $i_0$. Then
\begin{eqnarray}
\min_{\i \in P_\delta} p_\i \approx p_{\i_0} \approx \delta^t \label{minp}
\end{eqnarray}
where the implied constants are independent of $\delta$.
\end{lma}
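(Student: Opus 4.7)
The plan is to extract everything from two simple facts: the defining identity $p_{i_0}=r_{i_0}^{\,t}$ (which is just a restatement of $t=\log p_{i_0}/\log r_{i_0}$) and the defining inequalities $r_\i\le\delta<r_{\i^-}$ for $\i\in P_\delta$.

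First I would observe that the maximality of $t$ means that for every $j\in\{1,\ldots,N\}$ one has $\log p_j/\log r_j\le t$; since $\log r_j<0$, this rearranges to the termwise bound $p_j\ge r_j^{\,t}$, with equality precisely when $j=i_0$. Multiplying this inequality along any word $\i=i_1\cdots i_n$ yields the product bound $p_\i\ge r_\i^{\,t}$, valid for every $\i\in\I^*$, and an equality $p_{\i_0}=r_{\i_0}^{\,t}$ for the homogeneous string $\i_0$ of all $i_0$'s.

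Next I would sandwich $r_\i$ for $\i\in P_\delta$: the upper inequality $r_\i\le\delta$ is built into the definition of $P_\delta$, and the lower inequality $r_\i>r_{\min}\delta$ follows by writing $r_\i=r_{i_n}r_{\i^-}\ge r_{\min}r_{\i^-}>r_{\min}\delta$. Combining with the product bound gives, for every $\i\in P_\delta$,
\[
p_\i\;\ge\;r_\i^{\,t}\;\ge\;(r_{\min}\delta)^{t}\;=\;r_{\min}^{\,t}\,\delta^{t},
\]
so $\min_{\i\in P_\delta}p_\i\gtrsim\delta^{t}$. Applying the sandwich to $\i_0$ and using $p_{\i_0}=r_{\i_0}^{\,t}$ gives the two-sided estimate $r_{\min}^{\,t}\delta^{t}<p_{\i_0}\le\delta^{t}$, i.e.\ $p_{\i_0}\approx\delta^{t}$.

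Finally, since $\i_0\in P_\delta$, the trivial inequality $\min_{\i\in P_\delta}p_\i\le p_{\i_0}$ combined with the two matching estimates above forces $\min_{\i\in P_\delta}p_\i\approx p_{\i_0}\approx\delta^{t}$, with implied constants depending only on $(r_1,\ldots,r_N)$ and $(p_1,\ldots,p_N)$. There is no real obstacle here: the only thing one must be careful about is the sign flip in the passage from $\log p_j/\log r_j\le t$ to $p_j\ge r_j^{\,t}$, which is where the maximality of $i_0$ gets converted into a usable product inequality.
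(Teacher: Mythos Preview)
Your proof is correct and follows essentially the same argument as the paper: both use the termwise bound $p_j\ge r_j^{\,t}$ (with equality at $i_0$) together with the sandwich $r_{\min}\delta<r_\i\le\delta$ for $\i\in P_\delta$ to obtain $p_\i\gtrsim\delta^t$ and $p_{\i_0}\approx\delta^t$. The only cosmetic difference is that the paper writes the product bound in the exponent form $p_\i=\prod_k r_{i_k}^{\log p_{i_k}/\log r_{i_k}}\ge r_\i^{\,t}$, whereas you state $p_j\ge r_j^{\,t}$ first and then multiply.
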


\begin{proof}
Since $t=\frac{\log p_{i_0}}{\log r_{i_0}}$, 
\begin{eqnarray}
p_{\i_0}=r_{i_0}^{t|\i_0|}=r_{\i_0}^t \approx \delta^t, \label{minp1}
\end{eqnarray}
 where the last approximate equality follows because $\delta r_{\min} <r_{i_0}^{|\i_0|-1}r_{\min}\leq r_{\i_0} \leq \delta$ by definition of $P_\delta$.

Also, since $t=\max_{i} \frac{\log p_i}{\log r_i}$, for any $\i=i_1 \ldots i_n \in P_\delta$ we have
\begin{eqnarray}
p_\i=p_{i_1} \cdots p_{i_n}=r_{i_1}^{\frac{\log p_{i_1}}{\log r_{i_1}}} \cdots r_{i_n}^{\frac{\log p_{i_n}}{\log r_{i_n}}} \geq r_\i^t > (\delta r_{\min})^t \approx \delta^t\label{minp2}
\end{eqnarray}
where the second inequality follows because $r_\i \geq r_{i_1} \cdots r_{i_{n-1}} r_{\min} >\delta r_{\min}$ by definition of $P_\delta$. Combining \eqref{minp1} and \eqref{minp2} gives \eqref{minp}.
\end{proof}

\vspace{2mm}

\noindent \textit{Proof of upper bound in (\ref{other eqn2}).} Let $i_0 \in \{1, \ldots, N\}$ satisfy $\frac{\log p_{i_0}}{\log r_{i_0}}= \max_i \frac{\log p_i}{\log r_i}=t$. Fix arbitary $0<\epsilon<r_{\min}$ throughout the proof, and for each $k \in \N$ consider $P_{\epsilon^k}$. Let $\i \in P_{\epsilon^k}$. We can write $\i$ uniquely as $\i_k\i_{k-1} \ldots \i_1$ where for each $1 \leq m \leq k$, $\i_m \ldots \i_1 \in P_{\epsilon^m}$. We note that since $\epsilon<r_{\min}$, $|\i_1| \geq 2$ by definition of $P_\epsilon$. Moreover, for each $1 \leq m \leq k$,
$$r_{\i_m \ldots \i_1} \leq \epsilon^m<r_{\i_m \ldots \i_n \i_1'}$$
where $\i_1'$ denotes $\i_1$ with its last digit removed. Therefore
$$\epsilon^{m+1}<r_{\i_m \ldots \i_1'} \epsilon<r_{\i_m \ldots \i_1'} r_{\min} \leq r_{\i_m \ldots \i_1} \leq \epsilon^m$$
hence $\i_{m+1}$ must necessarily have non-zero length. Let $C_1:= \max_{\j \in P_\epsilon} \E w_{\j} < \infty$ and let $C_2:=\max_{\j \in P_\epsilon} |\j|$. Let $C=\max\{C_1,C_2\}$. 

Now, for each $2 \leq m \leq k$,
\begin{eqnarray}
\E w_{\i_m \ldots \i_1} &\leq& (\E w_{\i_{m-1} \ldots \i_{1}}+C)p_{\i_m}+(\E w_{\i_{m-1} \ldots \i_{1}}+C+\E w_{\i_m \ldots \i_1})(1-p_{\i_m}) \nonumber\\
&=& \E w_{\i_{m-1} \ldots \i_{1}} +C+(1-p_{\i_m}) \E w_{\i_m \ldots p_{\i_1}}. \label{recur1}
\end{eqnarray}
Therefore
\begin{equation}
p_{\i_m}\E w_{\i_m \ldots \i_1} \leq \E w_{\i_{m-1} \ldots \i_{1}}+C .\label{recur2}
\end{equation}
In particular, for $k \geq 2$,
\begin{eqnarray*}
p_{\i_2} \cdots p_{\i_k}\E w_{\i_k \ldots \i_1} &\leq& 2C+Cp_{\i_2}+Cp_{\i_3\i_2}+ \cdots + Cp_{\i_{k-1} \ldots \i_{2}} \\
&\leq& C+\frac{C}{1-p},
\end{eqnarray*}
where $p:= \max_{i \in \I} p_i$.  Therefore, 
\begin{eqnarray}
\E w_{\i} \lesssim p_{\i}^{-1}
\label{key ub}
\end{eqnarray}
where the implied constant depends on the probability vector $(p_1,\ldots,p_N)$ and on the choice of $\epsilon$ but does not depend on the length $k=|\i|$.

Now, fix $\delta>0$, and let $k \in \N$ satisfy $\epsilon^k \leq \delta <\epsilon^{k-1}$. Let $\j \in P_\delta$. Then we can choose some $\i \in P_{\epsilon^k}$ such that $\i|_n = \j$ for some $n \in \N$. Then $p_\i \geq p_{\j} p_{min}^{C_2} \approx p_{\j}$ where $p_{min}:=\min_{i}p_i$.  In particular
\begin{eqnarray}
\E w_{\j} \leq \E w_\i \lesssim p_\i^{-1} \lesssim p_{\j}^{-1}.
\label{key ub2}
\end{eqnarray}
Let $\i_0 \in P_\delta$ be the unique string consisting only of the digit $i_0$. By \eqref{minp},
\begin{eqnarray}
\max_{\j \in P_\delta}\E w_{\j} \lesssim p_{\j}^{-1} \lesssim p_{\i_0}^{-1}\approx \delta^{-t}.
\label{key ub3}
\end{eqnarray}

 By  Proposition \ref{matthewsu},
\begin{eqnarray*}
\E_{\i_0} \tau_{\textnormal{cov}} &\leq& \left(1+\frac{1}{2}+ \cdots + \frac{1}{|P_{\delta}|}\right)\max_{\i, \j \in P_{\delta}}\E_\i \tau_\j   \\
&\leq&   \left(1+\frac{1}{2}+ \cdots + \frac{1}{N_\delta}\right)\max_{ \j \in P_{\delta}}\E w_\j  \\
&\lesssim& \delta^{-t}\log\left(\frac{1}{\delta}\right)
\end{eqnarray*}
where we have used the fact that $N_\delta := |P_\delta| \leq r_{\min}^{-s}\delta^{-s}$ as remarked at the beginning of the previous section. The result now follows by Proposition \ref{main2}. 
\qed

\vspace{2mm}

In the above proof we did not refer to the question of whether or not \eqref{t thing} is uniquely maximised only at $i_0$, and indeed the upper bound of $ \E W_{\delta, v_0} \lesssim \delta^{-t} \log\left(\frac{1}{\delta}\right)$ for all $v_0 \in F$ is valid whether or not this is the case. However, in the case where (\ref{t thing}) is maximised uniquely this upper bound is not optimal owing to the fact that on a large part of the state space $P_\delta$, the expected hitting times are significantly lower than $\delta^{-t}$. Indeed for a typical choice of $\i \in P_\delta$ an expected hitting time $\E_\i \tau_\j$ will be of the order $\delta^{-t}$ only if the word $\j$ contains many instances of the digit $i_0$.  Thus in the case where \eqref{t thing} is maximised at a unique index it is useful to separate the less accessible part of the state space from the more accessible part of the state space and estimate the expected covering times of each of these parts separately. 

To be precise, for any $B \subset P_\delta$ we define
$$\tau_{\textnormal{cov}}^B= \min\{t \geq 0: \, \forall \i \in B, \; \exists s \leq t \; \textnormal{s.t.} \; X_s=\i\},$$
that is, the first time that all of the states in $B$ have been visited by the chain. The expected covering time of $B$ satisfies an analogous upper bound to the expected covering time (see \cite[(11.16)]{LePe17} which can easily be proven by adapting the proof of \cite[Theorem 11.2]{LePe17}):

\begin{prop}\label{matthewsu_A}
Fix $0<\delta<r_{\min}$ and $B \subset P_\delta$. Then
$$\max_{\i\in B} \E_\i \tau_{\textnormal{cov}}^B \leq \max_{\i,\j \in B} \E_\i \tau_\j \left(1+\frac{1}{2}+ \cdots + \frac{1}{|B|}\right).$$
\end{prop}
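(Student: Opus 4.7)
The idea is to run the classical Matthews argument (as in \cite[Theorem 11.2]{LePe17}) but restrict the random enumeration to $B$ rather than to all of $P_\delta$. Write $n := |B|$, enumerate $B = \{\j_1, \ldots, \j_n\}$, and introduce a uniformly random permutation $\sigma$ of $\{1, \ldots, n\}$, drawn independently of the Markov chain $(X_n^\delta)_{n=0}^\infty$. For $k = 0, 1, \ldots, n$ let $T_k$ be the first time at which every element of $\{\j_{\sigma(1)}, \ldots, \j_{\sigma(k)}\}$ has been visited by the chain, with $T_0 := 0$. By construction $T_n = \tau_{\textnormal{cov}}^B$, and telescoping gives $T_n = \sum_{k=1}^n (T_k - T_{k-1})$, so it suffices to bound each increment in expectation.

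The key observation is that $T_k - T_{k-1} > 0$ only on the event $L_k$ that $\j_{\sigma(k)}$ is the last of $\{\j_{\sigma(1)}, \ldots, \j_{\sigma(k)}\}$ to be visited by the chain. Since $\sigma$ is independent of the trajectory, conditioning on the unordered $k$-subset $\{\sigma(1), \ldots, \sigma(k)\}$ and using the fact that the position of $\sigma(k)$ within the induced ordering of that subset is uniform, one obtains $\mathbb{P}(L_k) = 1/k$. Moreover, at the stopping time $T_{k-1}$ the chain must be located at the last-visited element of $\{\j_{\sigma(1)}, \ldots, \j_{\sigma(k-1)}\}$ (when $k \geq 2$) or at the initial state (when $k = 1$); in either case $X^\delta_{T_{k-1}} \in B$, using the hypothesis $\i \in B$ for the starting state. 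Hence, conditioning on $\mathcal{F}_{T_{k-1}}$, $\sigma$, and the event $L_k$, the strong Markov property yields
\[\E[T_k - T_{k-1} \mid \mathcal{F}_{T_{k-1}}, \sigma, L_k] = \E_{X^\delta_{T_{k-1}}}\tau_{\j_{\sigma(k)}} \leq \max_{\i,\j \in B} \E_\i \tau_\j.\]

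Combining these estimates gives $\E_\i[T_k - T_{k-1}] \leq (1/k) \max_{\i,\j \in B} \E_\i \tau_\j$ for every starting state $\i \in B$, and summing over $k = 1, \ldots, n$ produces the claimed bound. The only step that genuinely distinguishes the argument from the full-state-space case is verifying $X^\delta_{T_{k-1}} \in B$ so that the uniform bound $\max_{\i,\j\in B}\E_\i\tau_\j$ may be applied after the strong Markov step; this is precisely the point at which the restriction of the maximum on the left-hand side to $\i \in B$ (as opposed to all of $P_\delta$) is used, and it is the mildly subtle feature of the adaptation.
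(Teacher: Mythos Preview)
Your proposal is correct and follows exactly the route the paper indicates: the paper does not give its own proof of this proposition but simply cites \cite[(11.16)]{LePe17} and remarks that it ``can easily be proven by adapting the proof of \cite[Theorem 11.2]{LePe17}''; what you have written is precisely that adaptation of the Matthews argument, with the correct observation that the starting state must lie in $B$ so that $X^\delta_{T_{k-1}} \in B$ at every stage.
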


In particular, in the case where \eqref{t thing} is uniquely maximised one can improve on the upper bound of $ \E W_{\delta, v_0} \lesssim \delta^{-t} \log\left(\frac{1}{\delta}\right)$ by instead estimating how long it would take for the Markov chain to first visit all states in a subset $B$ consisting of states $\i \in P_\delta$ which contain a restricted number of the digit $i_0$, followed by visiting all states in $B'=P_\delta \setminus B$. The subset $B$ would constitute most of the state space $P_\delta$, but would benefit from having a reduced upper bound on $\max_{\i,\j \in B} \E_\i \tau_\j$. On the other hand, the upper bound on  $\max_{\i,\j \in B'} \E_\i \tau_\j$ would be of the order $\delta^{-t}$, but $B'$ would only comprise a small proportion of the state space. Of course, there is a lot of flexibility in how the subset $B$ could be defined, so by carefully considering the contribution of each covering time $\E_\i\tau_{\textnormal{cov}}^B$ and $\E_\i\tau_{\textnormal{cov}}^{B'}$ one can choose $B$ in such a way that the upper bound on $ \E W_{\delta, v_0} $ is improved to the degree noted in Theorem \ref{natural}:

\vspace{2mm}

\begin{proof}[Proof of upper bound in \eqref{other eqn1}]
Suppose that $i_0 \in \{1, \ldots, N\}$ is the unique digit that satisfies $\frac{\log p_{i_0}}{\log r_{i_0}}= \max_i \frac{\log p_i}{\log r_i}=t$. Fix $\delta>0$ and consider $k_\delta$ satisfying $1\leq k_{\delta}\leq\ell_\delta$ to be chosen later. Define $B \subset P_\delta$ to be all strings which contain at most $k_\delta-1$ digits from the set $\{1, \ldots, N\} \setminus \{i_0\}$. Also denote $B'= P_\delta \setminus B$, so that $B'$ are all strings in $P_\delta$ which contain at least $k_\delta$ digits from the set $\{1, \ldots, N\} \setminus \{i_0\}$. 

We can bound the covering time above by the time it would take to first cover $B$ and then cover $B'$ (plus the intermediate travel time) yielding
\begin{eqnarray}
\max_{\i \in P_\delta} \E_\i \tau_{\textnormal{cov}} \leq \max_{\i \in P_\delta} \min_{\j \in B} \E_\i \tau_{\j} + \max_{\i \in B} \E_\i \tau_{\textnormal{cov}}^B + \max_{\i \in P_{\delta}} \min_{\j \in B'} \E_\i \tau_\j + \max_{\i \in B'} \E_\i \tau_{\textnormal{cov}}^{B'}.
\label{ub key}
\end{eqnarray}
We will use Proposition \ref{matthewsu_A} to bound $ \max_{\i \in B} \E_\i \tau_{\textnormal{cov}}^B$ and $\max_{\i \in B'} \E_\i \tau_{\textnormal{cov}}^{B'}$ in terms of $k_\delta$, before choosing $k_\delta$ in a way that optimises this bound.

To calculate $|B|$, notice that $B= \bigcup_{m=0}^{k_\delta-1} B_m$ where $B_m$ is the set of strings in $P_\delta$ which contain exactly $m$ occurrences of digits from the set $\{1, \ldots, N\} \setminus \{i_0\}$. Since any $\i \in B$ has length $|\i| \leq L_\delta$, there are no more than $L_\delta$ positions where the first digit from the set $\{1, \ldots, N\} \setminus \{i_0\}$ can appear, followed by no more than $L_\delta-1$ positions where the second digit from the set $\{1, \ldots, N\} \setminus \{i_0\}$ can appear and so on. Since we have $N-1$ possible choices of digits for each of these,
\begin{eqnarray*}
|B_m| &\leq& (N-1)^m L_\delta (L_\delta-1) \cdots (L_\delta -m+1) \\
&\leq& (N-1)^mL_\delta^m.
\end{eqnarray*}
Therefore,
$$|B| \leq \sum_{m=0}^{k_\delta-1} (N-1)^m L_\delta^m \lesssim ((N-1)L_\delta)^{k_\delta}.$$
Since $L_\delta \lesssim \log \left(\frac{1}{\delta}\right)$ we have $\log|B| \lesssim k_\delta \log \log \left(\frac{1}{\delta}\right)$. Notice that $B$ contains a string $\i_0$ which contains only the digit $i_0$, therefore $\min_{\i \in B} p_\i \approx p_{\i_0} \approx \delta^t$ by \eqref{minp}.

Next we consider $B'$. Since $B'=P_\delta \setminus B$ we have $\log |B'| \approx \log \left(\frac{1}{\delta}\right)$. Next we calculate $\min_{\i \in B'} p_\i$. Let $\i \in B'$. Then for some integer $m$ satisfying $k_\delta \leq m \leq L_\delta$, $\i$ contains $m$ digits from the set $\{1, \ldots, N\} \setminus \{i_0\}$. In particular there exist $j_1, \ldots , j_m \in \I \setminus \{i_0\}$ and some integer $n$ such that
$$r_\i=r_{j_1} \cdots r_{j_m} r_{i_0}^n \approx \delta.$$
We have
$$p_{i_0}^n=r_{i_0}^{n \frac{\log p_{i_0}}{\log r_{i_0}}} \approx \left(\frac{\delta}{r_{j_1} \cdots r_{j_m}}\right)^t.$$
Therefore
$$p_\i\approx \delta^t \frac{p_{j_1} \cdots p_{j_m}}{r_{j_1}^t \cdots r_{j_m}^t}.$$
Fix
$$c= t- \max_{i \neq i_0} \frac{\log p_i}{\log r_i} >0,$$
where positivity follows from the fact that $i_0$ uniquely achieves the maximum in (\ref{t thing}). It follows that for each $i \in \{1, \ldots, N\} \setminus \{ i_0\}$,
$$\frac{p_i}{r_i^t}=\frac{r_i^{\frac{\log p_i}{\log r_i}}}{r_i^t}= r_i^{\frac{\log p_i}{\log r_i}-t} \geq r_i^{-c}.$$
Therefore
$$p_\i \gtrsim \delta^t r_i^{-cm} \geq \delta^t r_{\max}^{-ck_\delta}.$$
Note that by (\ref{key ub2}),
$$\max_{\i \in P_\delta} \min_{\j \in B} \E_\i \tau_\j \leq \min_{\j \in B} \E w_\j \lesssim \min_{\j \in B} p_\j^{-1} \lesssim \delta^{-t}$$
and
$$\max_{\i \in P_\delta} \min_{\j \in B'} \E_\i \tau_\j \leq \min_{\j \in B'} \E w_\j \lesssim \min_{\j \in B'} p_\j^{-1} \lesssim \delta^{-t}r_{\max}^{ck_\delta}.$$
Therefore, by Proposition \ref{matthewsu_A} and (\ref{ub key}),
\begin{eqnarray}
\max_{\i \in P_\delta} \E_\i\tau_{\textnormal{cov}} &\lesssim& \delta^{-t} k_\delta \log \log \left(\frac{1}{\delta}\right) + \delta^{-t} r_{\max}^{ck_\delta} \log \left(\frac{1}{\delta}\right) \label{optimise}.
\end{eqnarray}
Consider the function $f_\delta(x)= x \log \log \left(\frac{1}{\delta}\right) -r_{\max}^{cx} \log \left(\frac{1}{\delta}\right)$. Observe that $f_\delta(2)<0$ and $f_\delta(\ell_\delta)>0$, provided $\delta$ is sufficiently small. Therefore there exists $2<x_\delta<\ell_\delta$ such that $f_\delta(x_\delta)=0$. Notice that $f_\delta(\log \log \left(\frac{1}{\delta}\right))<0$ provided $\delta$ is sufficiently small. Since $f_\delta(x)$ is increasing with $x$, $x_\delta> \log \log \left(\frac{1}{\delta}\right)$. Therefore, if we fix $k_\delta=\lfloor x_\delta \rfloor$, we have 
$$k_\delta \log \log \frac{1}{\delta} \approx r_{\max}^{ck_\delta} \log \left(\frac{1}{\delta}\right)$$
and $k_\delta \lesssim \log \log \left(\frac{1}{\delta}\right)$. 
Hence by (\ref{optimise}),
\begin{eqnarray*}
 \max_{\i_0 \in P_\delta} \E_{\i_0}\tau_{\textnormal{cov}} &\lesssim & \delta^{-t} k_\delta \log \log \left(\frac{1}{\delta}\right) \\
&\leq& \delta^{-t}  \left(\log \log \left(\frac{1}{\delta}\right)\right)^2.
\end{eqnarray*}
The result now follows from Proposition \ref{main2}.
\end{proof}

\vspace{2mm}

\subsection{Proofs of lower bounds} \hfill \\

To estimate the lower bounds on the expected covering time, we begin by obtaining a lower estimate for the expected hitting time $\E_\i \tau_\j$ in terms of $\E w_\j$.

\begin{lma} \label{theta}
Given arbitrary $\i, \j \in P_{\delta}$ where $\i \neq \j$ denote
$$\theta_{\i, \j}= \P_{\i}(\tau_{\j}<L_{\delta}).$$
Then
\begin{eqnarray}
 \E_{\i} \tau_{\j} \geq \E w_{\j} -\theta_{\i,\j}(L_{\delta}+\E w_{\j}). \label{theta ineq}
\end{eqnarray}
\end{lma}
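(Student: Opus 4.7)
The plan is to decompose $\E w_\j$ according to whether $\tau_\j \geq L_\delta$ or $\tau_\j < L_\delta$. The crucial observation is that every state in $P_\delta$ has length at most $L_\delta$, so whenever $t \geq L_\delta$ the state $X_t^\delta$ consists entirely of bits appended after time $0$. Combined with the already-noted inequality $w_\j \geq \tau_\j$, this forces the identity $w_\j = \tau_\j$ on the event $\{\tau_\j \geq L_\delta\}$, since on that event $\tau_\j$ itself witnesses the ``all new bits'' condition that defines $w_\j$.

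On the complementary event I bound $w_\j$ above by $L_\delta + w_\j'$, where $w_\j'$ is an equidistributed copy of $w_\j$ constructed from the bits appended after time $L_\delta$: explicitly, $w_\j'$ is the smallest $s \geq |\j|$ for which $a_{L_\delta+s}\,a_{L_\delta+s-1}\cdots a_{L_\delta+s-|\j|+1} = \j$. Shift invariance of the Bernoulli measure gives that $w_\j'$ has the same law as $w_\j$, and $w_\j'$ is independent of the event $\{\tau_\j < L_\delta\}$, since the latter is measurable with respect to the initial state $\i$ together with the bits $a_1,\ldots,a_{L_\delta-1}$, whereas $w_\j'$ depends only on $(a_{L_\delta+s})_{s \geq 1}$. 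The pointwise bound $w_\j \leq L_\delta + w_\j'$ holds because at time $L_\delta + w_\j'$ the last $|\j|$ appended bits spell $\j$, which forces $X_{L_\delta + w_\j'}^\delta = \j$ with all bits new.

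Combining these two observations yields
\begin{align*}
\E w_\j &= \E_\i[w_\j \mathbbm{1}_{\tau_\j \geq L_\delta}] + \E_\i[w_\j \mathbbm{1}_{\tau_\j < L_\delta}] \\
&\leq \E_\i[\tau_\j \mathbbm{1}_{\tau_\j \geq L_\delta}] + \E_\i[(L_\delta + w_\j')\mathbbm{1}_{\tau_\j < L_\delta}] \\
&\leq \E_\i \tau_\j + (L_\delta + \E w_\j)\,\theta_{\i,\j},
\end{align*}
where the second inequality uses the independence to factor $\E_\i[w_\j' \mathbbm{1}_{\tau_\j < L_\delta}] = (\E w_\j)\,\theta_{\i,\j}$; rearranging gives \eqref{theta ineq}. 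The one delicate point is setting up the auxiliary variable $w_\j'$ so that it both dominates $w_\j$ pointwise and is honestly independent of the event $\{\tau_\j < L_\delta\}$; both properties follow cleanly from the simple but pivotal fact that every element of $P_\delta$ has length at most $L_\delta$, so the initial state ceases to influence $X_t^\delta$ once $t \geq L_\delta$.
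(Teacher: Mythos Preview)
Your proof is correct and follows essentially the same approach as the paper's: the paper writes the single pointwise inequality $w_{\j} \leq \tau_{\j} + \mathbbm{1}_{\{\tau_{\j}<L_{\delta}\}}(L_{\delta}+w_{\j}^*)$ and takes expectations, where $w_{\j}^*$ is exactly your $w_{\j}'$. Your case decomposition unpacks the same inequality, and your justifications for $w_{\j}=\tau_{\j}$ on $\{\tau_{\j}\geq L_\delta\}$ and for the independence of $w_{\j}'$ from $\{\tau_{\j}<L_\delta\}$ are a bit more explicit than the paper's but otherwise identical in substance.
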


\begin{proof}
Observe that
\begin{eqnarray}
w_{\j} \leq \tau_{\j} + \mathbbm{1}_{\{\tau_{\j}<L_{\delta}\}}(L_{\delta}+w_{\j}^*) \label{pre theta}
\end{eqnarray}
where $w_{\j}^*$ is the amount of time required to build $\j$ from new bits after the $L_{\delta}$th bit has been added. Indeed (\ref{pre theta}) holds since if $(X_t)$ is such that $\j$ appears for the first time after $L_{\delta}$ time then $w_{\j}((X_t))=\tau_{\j}((X_t))$ whereas if $(X_t)$ is such that $\j$ appears for the first time before $L_{\delta}$ time then $w_{\j}((X_t)) \leq L_{\delta}+w_{\j}^*((X_t))$. 

Now, since $w_{\j}^*$ is independent of the event $\{\tau_{\j}<L_{\delta}\}$ and $w_{\j}$ has the same distribution as $w_{\j}^*$, we can take expectations in (\ref{pre theta}) to obtain
$$\E w_{\j} \leq \E_{\i} \tau_{\j} +\theta_{\i,\j}(L_{\delta}+\E w_{\j})$$
which completes the proof of (\ref{theta ineq}).
\end{proof}

The usefulness of (\ref{theta ineq}) is that $\E w_\j \geq \E_\j \tau_\j^+$, and the expected return time $\E_\j \tau_\j^+$ satisfies  the following formula (see \cite[ Proposition 1.14 ]{LePe17}).

\begin{prop} \label{return}
Fix $0<\delta<r_{\min}$. For all $\i\in P_\delta$,
$$\E_{\i}(\tau_\i^+)= \frac{1}{\pi_\i}= \frac{1}{p_\i}.$$
\end{prop}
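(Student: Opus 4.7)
The plan is to verify that the Markov chain $(X_n^\delta)_{n=0}^\infty$ satisfies the hypotheses of the classical return-time (Kac) formula and then invoke it, noting that most of the work has already been done in Proposition \ref{chain}. Specifically, Proposition \ref{chain}(a)--(c) tells us that $A_\delta$ is row stochastic and irreducible, and that the vector $\pi_\delta = (p_\i)_{\i \in P_\delta}$ is a left fixed vector. Since $\{[\i] : \i \in P_\delta\}$ is a partition of $\Sigma$, the Bernoulli measure of weights $(p_1,\ldots,p_N)$ gives $\sum_{\i \in P_\delta} p_\i = 1$, so $\pi_\delta$ is in fact a probability distribution. Irreducibility together with finiteness of $P_\delta$ yields uniqueness of the stationary distribution (via Perron--Frobenius, for instance), so $\pi_\delta$ is the unique stationary probability vector.

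With these hypotheses in place, Proposition 1.14 of \cite{LePe17} applies directly and gives $\E_\i \tau_\i^+ = 1/\pi_\i = 1/p_\i$, which is the claim.

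If one wishes to give a self-contained proof rather than cite the standard result, the natural route is to introduce the occupation measure
\[\mu(\j) := \E_\i\Bigl[\#\bigl\{0 \leq n < \tau_\i^+ : X_n^\delta = \j\bigr\}\Bigr], \qquad \j \in P_\delta,\]
and show by a strong Markov argument at the first return to $\i$ that $\mu A_\delta = \mu$. Observe that $\mu(\i) = 1$, and that $\mu$ is finite on every state by irreducibility and finiteness of $P_\delta$. Uniqueness of the stationary measure (up to scalar) then forces $\mu = (\E_\i \tau_\i^+)\, \pi_\delta$, and evaluating this identity at the coordinate $\i$ gives $1 = \pi_\i \cdot \E_\i \tau_\i^+$, proving the formula.

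There is no serious obstacle here: the content has been packaged into Proposition \ref{chain}, and what remains is a routine application of the Kac formula for the mean return time of an irreducible finite Markov chain.
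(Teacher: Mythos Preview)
Your proposal is correct and matches the paper's treatment: the paper does not prove this proposition either but simply cites \cite[Proposition~1.14]{LePe17}, relying implicitly on the irreducibility and stationary distribution established in Proposition~\ref{chain}. Your additional verification that $\pi_\delta$ is a probability vector (via the partition property) and your optional self-contained occupation-measure argument are both fine elaborations, but not required.
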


So, in order to apply (\ref{theta ineq}) we require an estimate on the probability $\theta_{\i,\j}$ of a fast hitting time of state $\j$ from state $\i$, which is provided by the following lemma.

\begin{lma} \label{trans}
Fix $\i, \j \in P_{\delta}$, where $\i \neq \j$. Suppose that $\tau_{\j}((X_t)) \geq j$ whenever $X_0=\i$. Then
$$\theta_{\i, \j}= \P_{\i}(\tau_{\j}<L_{\delta}) \leq \frac{p^j}{1-p} +p^{\ell_\delta}(L_{\delta}-\ell_\delta).$$
\end{lma}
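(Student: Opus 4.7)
The plan is to bound $\theta_{\i,\j}$ by a union bound over the possible early hitting times and then to evaluate the probability that the chain is at state $\j$ at each such time directly from the ``prepend-and-trim'' description of the dynamics.

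First I would combine the hypothesis $\tau_\j \geq j$ with the definition of $\theta_{\i,\j}$ to obtain
\[
\theta_{\i,\j} = \P_\i(j \leq \tau_\j < L_\delta) \leq \sum_{t=j}^{L_\delta - 1} \P_\i(X_t = \j).
\]
The main step is then to evaluate each term. By the construction of the chain, $X_t$ is the unique element of $P_\delta$ which is a prefix of the long concatenation $i_t i_{t-1} \cdots i_1 \i$ (read as a single word, with the initial state $\i$ appended on the right), and hence $\{X_t = \j\}$ is precisely the event that $\j = j_1 \cdots j_{|\j|}$ coincides with the first $|\j|$ symbols of this concatenation. The plan is to split on the sign of $t - |\j|$: if $t \geq |\j|$ the matching condition only involves the fresh random bits $i_t,\ldots,i_{t-|\j|+1}$, so $\P_\i(X_t = \j) = p_\j \leq p^{|\j|} \leq p^{\ell_\delta}$; if $t < |\j|$ the first $t$ positions require $i_t \cdots i_1 = j_1 \cdots j_t$, which occurs with probability $p_{j_1} \cdots p_{j_t} \leq p^t$, while the remaining positions are determined by $\i$ and contribute at most an indicator factor, giving the crude bound $\P_\i(X_t = \j) \leq p^t$ independently of $\i$.

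Finally I would assemble the pieces. Splitting the sum at $t = |\j|$, the part $j \leq t < |\j|$ is controlled by the geometric tail $\sum_{t \geq j} p^t = p^j/(1-p)$, while the part $|\j| \leq t \leq L_\delta - 1$ contributes at most $(L_\delta - |\j|) p^{\ell_\delta} \leq (L_\delta - \ell_\delta) p^{\ell_\delta}$. Adding these yields the stated inequality; in the degenerate case $j \geq |\j|$ the first range is empty and only the second term appears, which is still dominated by the same expression. I expect the only mildly delicate point to be correctly identifying $X_t$ with a prefix of the concatenation $i_t \cdots i_1 \i$ when $t$ is so small that the initial state still leaks into the current state, but this subtlety is sidestepped by the uniform $p^t$ bound, which requires no case analysis on the contents of $\i$.
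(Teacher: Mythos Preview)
Your proposal is correct and follows essentially the same route as the paper. The only cosmetic difference is that you bound $\P_\i(X_t=\j)$ directly (noting $\{\tau_\j=t\}\subseteq\{X_t=\j\}$), whereas the paper phrases the identical estimate as a bound on $\P_\i(\tau_\j=k)$; both split at $|\j|$, use $p^t$ for the short range and $p^{|\j|}\leq p^{\ell_\delta}$ for the long range, and sum in the same way.
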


\begin{proof} 
Fix such $\i$ and $\j$. We have
$$ \P_{\i}(\tau_{\j}<L_{\delta}) \leq \P_{\i}(\tau_{\j}=j)+ \P_{\i}(\tau_{\j}=j+1)+ \cdots+ \P_{\i}(\tau_{\j}=L_{\delta}-1). $$
For each $j \leq k \leq |\j|-1$, $\P_\i(\tau_\j=k) \leq p^k$, since $k$ correct transitions are required (which correspond to the $k$ correct digits that need to be appended to the left of the word $\i$). If $|\j|< L_{\delta}$, for each $|\j| \leq k \leq L_{\delta}-1$, $\P_\i(\tau_\j=k) \leq p^{|\j|}$, since $|\j|$ correct transitions are required. Therefore
\begin{eqnarray*}
 \P_{\i}(\tau_{\j}<L_{\delta}) &\leq& \P_{\i}(\tau_{\j}=j)+ \P_{\i}(\tau_{\j}=j+1)+ \cdots+  \P_{\i}(\tau_{\j}=L_{\delta}-1) \\
&\leq& p^{j}+p^{j+1}+ \cdots + p^{|\j|-1}+(L_{\delta}-|\j|)p^{|\j|} \\
&\leq &p^{j}+p^{j+1}+ \cdots + p^{\ell_\delta-1}+(L_{\delta}-\ell_\delta)p^{\ell_\delta}\\
&=& p^{j} \frac{1-p^{\ell_\delta-j-2}}{1-p} +(L_{\delta}-\ell_\delta)p^{\ell_\delta}\\
&\leq& \frac{p^j}{1-p} +p^{\ell_\delta}(L_{\delta}-\ell_\delta).
\end{eqnarray*}
\end{proof}

We are almost ready to prove the lower bounds on $\E W_{\delta,v_0}$ from (\ref{other eqn1}) and (\ref{other eqn2}) via appropriate lower bounds on the expected covering time of $(X_n^\delta)_{n=0}^\infty$. Analogously to Proposition \ref{matthewsu}, the original lower bound of Matthews \cite{Ma88} bounds the minimum expected covering time from below by the minimum expected hitting times between distinct states multiplied by the logarithm of the cardinality of the state space. Clearly, this bound is insufficient for our purposes, since it can yield a lower bound merely of the order $\log\left(\frac{1}{\delta}\right)$ owing to the fact in general some states in $P_\delta$ will be extremely close to one other. Instead, we can again improve on this bound by considering the expected covering time of a subset of the state space, where this time the elements in the subset are chosen in such a way that they are all ``uniformly far'' from each other in the sense that the expected hitting times of possible pairs of states are uniformly bounded below. For this we will require the following analogue of Proposition \ref{matthewsu_A} (see \cite[Proposition 11.4 ]{LePe17}\footnote{Although Proposition 11.4 in \cite{LePe17} is not stated exactly as it is here, Proposition \ref{matthewsl2} can easily be gleaned from the proof of \cite[Proposition 11.4]{LePe17}.}).

\begin{prop}\label{matthewsl2}
Let $0<\delta<r_{\min}$ and $B \subset P_\delta$. Then for all $\i_0 \in B$,
$$\E_{\i_0} \tau_{\textnormal{cov}}\geq \E_{\i_0} \tau_{\textnormal{cov}}^B \geq \min_{\i,\j \in B, \i \neq \j} \E_\i(\tau_\j) \left(1+\frac{1}{2}+ \cdots + \frac{1}{|B|-1}\right).$$
\end{prop}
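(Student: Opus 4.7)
The first inequality $\E_{\i_0}\tau_{\textnormal{cov}} \geq \E_{\i_0}\tau_{\textnormal{cov}}^B$ is immediate from the definitions, since any realisation of the chain that visits every state in $P_\delta$ \emph{a fortiori} visits every state in $B$. The content of the proposition is therefore the second inequality, which is the lower-bound half of Matthews' method. I would prove it by the classical random-ordering argument adapted from \cite[Proposition 11.4]{LePe17}.

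The plan is as follows. Draw a permutation $\sigma$ of $B \setminus \{\i_0\}$ uniformly at random, independently of the chain $(X_n^\delta)_{n=0}^\infty$, and set $\sigma(0):=\i_0$. For $k=0,1,\ldots,|B|-1$ let $T_k:=\inf\{n\geq 0\colon X_n^\delta=\sigma(k)\}$ (so that $T_0=0$), and let $L_k\in\{0,1,\ldots,k\}$ be the index of the last of $\sigma(0),\ldots,\sigma(k)$ to be visited, i.e.\ the index for which $T_{L_k}=\max_{0\le j\le k}T_j$, with ties broken by taking the smallest such index. Then $\tau_{\textnormal{cov}}^B=T_{L_{|B|-1}}$ holds almost surely under $\P_{\i_0}$, and the telescoping identity
\[T_{L_{|B|-1}} \;=\; \sum_{k=1}^{|B|-1}\bigl(T_{L_k}-T_{L_{k-1}}\bigr)\]
expresses the covering time as a sum of non-negative increments. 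Each increment vanishes unless $L_k=k$, that is, unless $\sigma(k)$ is strictly the last of $\sigma(0),\ldots,\sigma(k)$ to be visited.

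The key step is then a symmetry computation together with the strong Markov property. Since $T_0=0$ is always the smallest of $T_0,\ldots,T_k$, the event $\{L_k=k\}$ coincides with the event that $\sigma(k)$ is the last visited among $\sigma(1),\ldots,\sigma(k)$; by exchangeability of the uniformly random subset $\{\sigma(1),\ldots,\sigma(k)\}$ of $B\setminus\{\i_0\}$, this occurs with probability exactly $1/k$ (the case $k=1$ contributes the leading $1$ in the harmonic sum, since $T_1\ge 1>T_0$ forces $L_1=1$). Conditioning on $\sigma$ and on $\{L_k=k\}$, the strong Markov property applied at the stopping time $T_{L_{k-1}}$ then gives
\[\E\bigl[T_k-T_{L_{k-1}}\,\big|\,\sigma,\,L_k=k\bigr]=\E_{\sigma(L_{k-1})}\tau_{\sigma(k)}\;\geq\;\min_{\i\neq\j\in B}\E_\i\tau_\j.\]
Summing over $k$ and taking total expectations yields the claimed inequality.

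The only point requiring a little care is the measurability bookkeeping in the strong-Markov step: one must exploit the independence of $\sigma$ from the chain to argue that, after conditioning on the permutation and on $\{L_k=k\}$, the post-$T_{L_{k-1}}$ trajectory is still a copy of $(X_n^\delta)_{n=0}^\infty$ started at $\sigma(L_{k-1})$, so that the hitting time of $\sigma(k)$ is independent of the past and has the stated expectation. Beyond this routine check the argument is short; as noted the statement is essentially contained in \cite[Proposition 11.4]{LePe17}, and I would simply write the above outline and refer the reader there for the finer details.
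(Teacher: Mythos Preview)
Your proposal is correct and follows exactly the approach the paper relies on. The paper does not give its own proof of this proposition but simply cites \cite[Proposition 11.4]{LePe17} (with a footnote noting that the present formulation can be gleaned from that proof); your outline is precisely the random-permutation argument from that reference, with the small adaptation of fixing $\sigma(0)=\i_0\in B$ so that at each step $X_{T_{L_{k-1}}}=\sigma(L_{k-1})$ lies in $B$ and the bound $\min_{\i\neq\j\in B}\E_\i\tau_\j$ applies. One notational nitpick: the displayed equality $\E\bigl[T_k-T_{L_{k-1}}\mid\sigma,L_k=k\bigr]=\E_{\sigma(L_{k-1})}\tau_{\sigma(k)}$ is not literally an equality of numbers since $L_{k-1}$ is still random under that conditioning; the clean statement is $\E\bigl[T_k-T_{L_{k-1}}\mid\sigma,\mathcal{F}_{T_{L_{k-1}}}\bigr]\mathbf{1}_{\{L_k=k\}}=\E_{\sigma(L_{k-1})}[\tau_{\sigma(k)}]\mathbf{1}_{\{L_k=k\}}$, which you already flag as the ``measurability bookkeeping'' step.
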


We are now ready to obtain a lower bound on $\E W_{\delta,v_0}$ in the case that (\ref{t thing}) is maximised uniquely at some $i_0 \in \{1, \ldots, N\}$. Since the least accessible part of the state space $P_\delta$ comprises states $\i \in P_\delta$ which consist mostly of the digit $i_0$, the most effective choice of $B$ in Proposition \ref{matthewsl2} is a subset of this type. Then applying the lower estimates from Lemma \ref{theta} on the expected hitting time will yield the desired result.

\vspace{2mm}

\begin{proof}[Proof of lower bound in (\ref{other eqn1})]
Fix $\delta>0$ sufficiently small such that $\frac{p^{\ell_\delta-1}}{1-p}+p^{\ell_\delta}(L_\delta-\ell_\delta) \leq \frac{1}{2}$ and denote $i_0 \in \{1, \ldots, N\}$ to be the digit that satisfies $\frac{\log p_{i_0}}{\log r_{i_0}}= \max_i \frac{\log p_i}{\log r_i}=t$. Let $j \in \{1, \ldots, N\} \setminus \{i_0\}$. Define
$$A=\{\i \in P_{\delta/r_j^2}: \textnormal{$\i$ contains one instance of $j$ and $|\i|-1$ instances of $i_0$}\}$$
and $B= \{jj \i: \i \in A\} \subset P_\delta$. For any $\j \in B$, $\pi_\j =p_\j \approx \delta^{t}$ and therefore for any $\i, \j \in B$, where $\i \neq \j$ we have
\begin{eqnarray*}
\E_\i \tau_\j &\geq&  (1-\theta_{\i,\j}) \E w_\j -L_{\delta}\theta_{\i,\j}\\
&\geq& (1-\theta_{\i,\j}) \E_\j \tau_\j^+ -L_{\delta}\theta_{\i,\j} \\
&\gtrsim& (1-\theta_{\i,\j}) \delta^{-t} -L_{\delta} \theta_{\i,\j}
\end{eqnarray*}
by (\ref{theta ineq}) and Proposition \ref{return}.
In order to bound $\theta_{\i,\j}$, notice that by definition of $B$, at least $|\j|-1$ transitions are required to hit the state $\j$ from $\i$. Thus by Lemma \ref{trans} and our assumption on $\delta$,
$$\theta_{\i, \j}\leq \frac{p^{|\j|-1}}{1-p} +p^{\ell_\delta}(L_{\delta}-\ell_\delta) \leq \frac{p^{\ell_\delta-1}}{1-p} +p^{\ell_\delta}(L_{\delta}-\ell_\delta) \leq \frac{1}{2}.$$



Finally, to calculate $|B|$, observe that there are at least $l_{\delta/r_j^2}\approx \log \frac{1}{\delta}$ distinct positions at which the digit $j$ can be placed within a string $\i \in A$ and therefore $\log|B|=\log|A| \approx \log\log \left(\frac{1}{\delta}\right)$.

 By Proposition \ref{matthewsl2}
\begin{eqnarray*}
 \min_{\i_0 \in P_\delta}\E_{\i_0} \tau_{\textnormal{cov}} &\geq& \min_{\i\in P_{\delta}}\min_{\j \in B}\E_\i \tau_\j + \min_{\i \in B} \E_{\i}  \tau^B_{\textnormal{cov}} \\
&\geq& \left(1+\frac{1}{2}+ \cdots + \frac{1}{|B|-1}\right) \min_{\substack{\i, \j \in B \\ \i \neq \j}} \E_{\i} \tau_{\j}\\
& \gtrsim& \delta^{-t}\log\log \left(\frac{1}{\delta}\right).\end{eqnarray*}
The result follows by Proposition \ref{main2}. \end{proof}

\vspace{2mm}

All that remains is for us to  obtain the lower bound in Theorem \ref{natural} in the case that (\ref{t thing}) is not uniquely maximised in $\{1, \ldots, N\}$. Since there must be at least two digits which attain this maximum, Proposition \ref{matthewsl2} can be applied for a choice of $B \subset P_\delta$ where the cardinality of $B$ is exponential in $\delta^{-1}$. This allows us to recover a sharp lower bound for $\E W_{\delta, v_0}$ in this case.

\vspace{2mm}

\begin{proof}[Proof of lower bound in (\ref{other eqn2}).]
Let $\mathcal{J} \subset \{1, \ldots, N\}$ be the set for which the maximum in (\ref{t thing}) is attained, where $|\mathcal{J}| \geq 2$ by assumption. Define
$$P_{\delta}'=P_{\delta} \cap \mathcal{J}^*$$
where $\mathcal{J}^*$ denotes the set of finite words with digits in $\mathcal{J}$. We begin by showing that there exists $a>0$ such that $|P_{\delta}'| \approx \delta^{-a}$. Let $a >0$ satisfy $\sum_{i \in \mathcal{J}}r_i^a=1$ and let $\P$ be the Bernoulli measure on $\Sigma$ where $\P([i])=r_i^a$ if $i \in \mathcal{J}$ and $\P[i]=0$ otherwise. Then $1=\sum_{\i \in P_{\delta}} p_{\i} = \sum_{\i \in P_{\delta}'} r_{\i}^a$, therefore $r_{\min}^a\delta^a|P_{\delta}'| < 1 \leq \delta^a|P_{\delta}'|$, implying that $|P_{\delta}'| \approx \delta^{-a}$. 

Fix $j \in \N$ sufficiently large such that $\frac{p^j}{1-p} \leq \frac{1}{4}$. Fix $\delta>0$ sufficiently small such that $\ell_\delta >j$ and $p^{\ell_\delta}(L_\delta-\ell_\delta) \leq \frac{1}{4}$.  Fix arbitrary $\i_0 \in P_{\delta}$ and denote the first $j$ digits of $\i_0$ by $i_1 \ldots i_{j}$.  Define a new word $\k=k_1 \ldots k_{j}$ by setting $k_1=i_1$ and fixing $k_2= \cdots= k_j=w_1 \in \mathcal{J} \setminus \{i_1\}$. Note that it is not necessarily true that $\{i_1\} \subset \mathcal{J}$, but if this is the case then $w_1 \in \mathcal{J} \setminus \{i_1\}$ can always be chosen since $|\mathcal{J}| \geq 2$. Define $B_{\i_0} \subset P_{\delta}$ as the set 
$$B_{\i_0}=  \{\i \in P_{\delta}: \i=\k \j \; \textnormal{for some} \; \j \in \mathcal{J}^*\}.$$
We begin by claiming that for all $\delta>0$ sufficiently small and all $\i \in B_{\i_0}$, $\pi_\i \lesssim \delta^{t}$. Writing $\i=\k\j$ as in the definition of $B_{\i_0}$, we have
$$\pi_\i=p_\i=p_\k p_\j \leq \frac{\delta^t}{r_{\min}^{jt}} p^{j} \lesssim \delta^t. $$ 

Next we estimate $\theta_{\i,\j}=\P_{\i}(\tau_\j^+ < L_{\delta})$ for $\i, \j \in B_{\i_0}$, $\i \neq \j$. Fix such $\i$ and $\j$. Observe that both $\i|_j=\k$ and $\j|_j=\k$. Since $k_1$ does not agree with $k_2, \ldots, k_j$, at least $j$ transitions are required before the chain can hit the state $\j$, when starting from state $\i$. Therefore by Lemma \ref{trans} and our assumptions on $j$ and $\delta$,
$$\theta_{\i,\j} \leq \frac{p^j}{1-p} + (L_{\delta}-\ell_\delta)p^{\ell_\delta}  \leq \frac{1}{2}.$$

Next we bound $|B_{\i_0}|$.  Observe that 
$$B_{\i_0}=\{\k\j: \j \in P_{\frac{\delta}{r_{\k}}}'\},$$ 
therefore $|B_{\i_0}|=|P_{\frac{\delta}{r_{\k}}}'|\approx \delta^{-a}$, where $a$ satisfies $\sum_{i \in \mathcal{J}}r_i^a=1$, as before. Therefore by Lemma \ref{matthewsl2}, for any $\j \in B_{\i_0}$,
$$\E_{\j}(\tau_{\textnormal{cov}}^{B_{\i_0}}) \gtrsim \delta^{-t} \log \left( \frac{1}{\delta}\right).$$
By Proposition \ref{matthewsl2}, 
\begin{eqnarray*}
 \min_{\i_0\in P_{\delta}}\E_{\i_0}  \tau_{\textnormal{cov}} 
&\geq& \min_{\i_0 \in P_{\delta}} \min_{\j \in B_{\i_0}} \E_{\i_0}(\tau_\j) + \min_{\i_0\in P_{\delta}} \min_{\j \in B_{\i_0}} \E_{\j}(\tau_{\textnormal{cov}}^{B_{\i_0}}) \\
& \gtrsim &\delta^{-t}\log \left( \frac{1}{\delta}\right).\end{eqnarray*}
The result follows by Proposition \ref{main2}.
\end{proof}

\section{Directions for future research}

Besides the problem of obtaining a sharp estimate for the asymptotic behaviour of the expected $\delta$-waiting time in the case where the maximum in \eqref{t thing} is attained uniquely, several further directions of research suggest themselves. On the one hand, while this work helps to shed light on how the sequence $(x_n)_{n=0}^\infty$ approaches the attractor set, we have not investigated the related question of how quickly the measures $\frac{1}{n}\sum_{k=0}^{n-1}\delta_{x_k}$ approach the self-similar limit measure $m=\sum_{i=1}^N r_i (S_i)_*m$ (with respect to, for example, the Wasserstein distance) and this question may be of interest in future research. It is also interesting to ask how far these results may be extended to the context of iterated function systems defined by maps which are not similarities (such as affine or conformal differentiable transformations) and to cases where the open set condition is not satisfied. Finally, we note that there are analogous questions which make sense for deterministic chaotic dynamical systems. For example, if $T \colon \mathbb{R}/\mathbb{Z} \to \mathbb{R}/\mathbb{Z}$ is the doubling map $T(x):=2x \mod 1$, then for Lebesgue almost every $x \in \mathbb{R}/\mathbb{Z}$ the sequence $\{x,Tx,T^2x,\ldots,\}$ is dense in $\mathbb{R}/\mathbb{Z}$. One could just as easily ask how the expectation with respect to $x$ of the first integer $n$ such that the sequence $\{x,Tx,\ldots,T^{n-1}x\}$ is $\delta$-dense in $\mathbb{R}/\mathbb{Z}$ behaves as a function of $\delta$ in the limit $\delta \to 0$. For the doubling map $T(x):=2x\mod 1$ this question can be reduced via Markov partitions to the coupon-collector's problem, but for smooth expanding maps or even Anosov diffeomorphisms the details of such an argument are less clear.

\section{Acknowledgements}

 Both authors were financially supported by the \emph{Leverhulme Trust} (Research Project Grant number RPG-2016-194). NJ was also financially supported by the \emph{EPSRC} (Standard Grant EP/R015104/1). NJ thanks John Sylvester for enlightening conversations about covering problems for Markov chains.

\bibliographystyle{acm}
\bibliography{other}
\end{document}